\newtheorem{theorem}{Theorem}
\newtheorem{lemma}[theorem]{Lemma}
\newtheorem{corollary}[theorem]{Corollary}
\newtheorem{proposition}[theorem]{Proposition}
\newtheorem{remark}[theorem]{Remark}
\newenvironment{proofof}[1]{\begin{trivlist}
    \item[\hskip\labelsep{\it Proof of {#1}.}]}{$\hfill\Box$\end{trivlist}}
\newcommand{\satop}[2]{\stackrel{\scriptstyle{#1}}{\scriptstyle{#2}}}
\newcommand{\bsgamma}{{\boldsymbol{\gamma}}}
\newcommand{\bsl}{\boldsymbol{l}}
\newcommand{\bsb}{\boldsymbol{b}}
\newcommand{\bsx}{\boldsymbol{x}}
\newcommand{\bsh}{\boldsymbol{h}}
\newcommand{\bsz}{\boldsymbol{z}}
\newcommand{\bsU}{\boldsymbol{U}}
\newcommand{\bsone}{\boldsymbol{1}}
\newcommand\setu{{\mathfrak{u}}}
\newcommand{\icomp}{\mathtt{i}}
\newcommand{\bszero}{\boldsymbol{0}}
\newcommand{\rd}{\,\mathrm{d}}
\newcommand{\NN}{\mathbb{N}}
\newcommand{\ZZ}{\mathbb{Z}}
\newcommand{\RR}{\mathbb{R}}
\newcommand{\calH}{\mathcal{H}}
\newcommand{\EE}{\mathbb{E}}
\newcommand{\abs}[1]{\left\vert#1\right\vert}
\newcommand{\bbone}{\mathbb{I}}
\newcommand{\calP}{\mathcal{P}}
\newcommand{\calA}{\mathcal{A}}
\newcommand{\calO}{{\mathcal{O}}}
\newcommand{\calZ}{{\mathcal{Z}}}
\newcommand{\bbZ}{{\mathbb{Z}}}
\newcommand{\bbN}{{\mathbb{N}}}
\begin{document}

\title{Lattice rules with random $n$ achieve nearly the optimal $\mathcal{O}(n^{-\alpha-1/2})$ error
independently of the dimension}

\author{Peter Kritzer, Frances Y. Kuo, Dirk Nuyens, Mario Ullrich}

\date{\today}

\maketitle

\begin{abstract}
We analyze a new random algorithm for numerical integration of $d$-variate
functions over $[0,1]^d$ from a weighted Sobolev space with
dominating mixed smoothness $\alpha\ge 0$ and product weights
$1\ge\gamma_1\ge\gamma_2\ge\cdots>0$, where the functions are
continuous and periodic when $\alpha>1/2$. The algorithm is based
on rank-$1$ lattice rules with a random number of points~$n$. For the case
$\alpha>1/2$, we prove that the algorithm achieves almost the optimal
order of convergence of $\mathcal{O}(n^{-\alpha-1/2})$, where the implied
constant is independent of the dimension~$d$ if the weights satisfy
$\sum_{j=1}^\infty \gamma_j^{1/\alpha}<\infty$. The same rate of
convergence holds for the more general case $\alpha>0$ by adding a random
shift to the lattice rule with random $n$. This shows, in particular, that
the exponent of strong tractability in the randomized setting equals
$1/(\alpha+1/2)$, if the weights decay fast enough. We obtain a lower
bound to indicate that our results are essentially optimal.

This paper is a significant advancement over previous related works with
respect to the potential for implementation and the independence of error
bounds on the problem dimension. Other known algorithms which achieve the
optimal error bounds, such as those based on Frolov's method, are very
difficult to implement especially in high dimensions. Here we adapt a
lesser-known randomization technique introduced by Bakhvalov in 1961. This
algorithm is based on rank-$1$ lattice rules which are very easy to
implement given the integer generating vectors. A simple probabilistic
approach can be used to obtain suitable generating vectors.
\end{abstract}


\section{Introduction} \label{sec:intro}

We study the problem of numerical integration of $d$-variate functions,
i.e., the approximation of
\[
 I_d(f) \,:=\,
 \int_{[0,1]^d} f(\bsx)\rd\bsx
\]
for $f$ in a weighted Sobolev space $\calH_{d,\alpha,\bsgamma}$ with
smoothness parameter $\alpha\ge 0$ and product weights $\bsgamma =
(\gamma_j)_{j\ge1}$ (details are provided below), where the functions are
continuous and periodic when $\alpha>1/2$ (also known in this case
as a weighted ``Korobov space'' in recent literature). To this end we use
a randomized algorithm $M_n$ that employs at most $n\in\NN$ function
evaluations. The basic building blocks of the algorithm $M_n$ are
so-called rank-$1$ lattice rules of the form
\begin{equation}\label{eq:lr}
Q_{d,p,\bsz}(f) \,:=\,
 \frac{1}{p}\sum_{k=0}^{p-1} f\left(\left\{\frac{k\bsz}{p}\right\}\right),
\end{equation}
where $p$ is a prime number, $\bsz\in\bbZ^d$ is known as the generating
vector, and $\{x\}$ denotes the fractional part of a real number $x$ and
is applied componentwise to a vector. Lattice rules as in \eqref{eq:lr}
are very well studied in the field of quasi-Monte Carlo methods (see
\cite{DKS13} and \cite{SJ94} for overviews on lattice rules, and
\cite{DP10,N92} for further introductions to the field of quasi-Monte
Carlo methods). Here, however, we do \textit{not} study one fixed
quasi-Monte Carlo rule, but an algorithm based on randomly choosing one of
a certain set of lattice rules. Indeed, the random algorithm $M_n$, for
given $n\in\NN$, is defined by $Q_{d,p,\bsz}$ with randomly chosen prime
$p\in\{\lceil n/2\rceil+1,\dots,n\}$ and $\bsz\in\{1,\dots,p-1\}^d$. To be
precise, we choose a random prime number $p$ in the given range and then a
random generating vector $\bsz$ from a certain set $\calZ_p$ of ``good''
generating vectors (see \eqref{eq:Zp-def} below). We call such an
algorithm $M_n$ a \emph{randomized lattice algorithm}.

An algorithm of such a form was first analyzed by Bakhvalov in~\cite{B61}
(see also~\cite{B59} for general results on randomized algorithms)
for generating vectors of a special form commonly known as the
``Korobov type'', i.e., $\bsz=(1,z,\dots,z^{d-1})$ for some $z\in\NN$.
Bakhvalov proved that this algorithm has almost the optimal order of
convergence in a Sobolev space with dominating mixed smoothness, but
the error bound depends on the dimension $d$.

A significant advancement of this paper is that we prove the existence of
good generating vectors, not restricted to the Korobov type, such that
our randomized lattice algorithm $M_n$ achieves almost the optimal order
of convergence in weighted Sobolev spaces, with the error bound
independent of $d$ under a summability condition on the weights $\bsgamma$
that is common in this field of research.

The advantage over previous related works lies in the potential for
implementation. Other known algorithms which achieve the optimal error
bounds, such as those based on Frolov's method (see, e.g.,
\cite{Fr76,KOU17,KN16,MU15,MU17,UU16}), are very difficult to implement
especially in high dimensions. For our algorithm, a simple probabilistic
approach can be used to obtain suitable generating vectors, see
Remark~\ref{rem:imp} below.

More precisely, we analyze the \emph{randomized \textnormal{(}worst
case\textnormal{)} error} for our randomized lattice algorithm $M_n$ in
the unit ball of $\calH_{d,\alpha,\bsgamma}$, defined by
\begin{equation} \label{eq:rand-err}
 e^{\rm ran}_{d,\alpha,\bsgamma}(M_n) \,:=\,
 \sup_{\substack{f\in \calH_{d,\alpha,\bsgamma}\\ \|f\|_{d,\alpha,\bsgamma}\le 1}}
 \EE\Big[\big| M_n(f)- I_d (f) \big|\Big].
\end{equation}
The norm $\|\cdot\|_{d,\alpha,\bsgamma}$ will be defined in
Section~\ref{sec:spaces}. The details of the expectation will be made
clear when we formally specify our randomized lattice algorithm in
Section~\ref{sec:rand}. We have three main theorems in this paper. In
Theorem~\ref{thm1} below, we prove for $\alpha>1/2$ (thus all functions
are continuous and periodic) and $n$ sufficiently large that
\[
  e^{\rm ran}_{d,\alpha,\bsgamma}(M_n)
  \,=\, \calO( n^{-a-1/2})
  \qquad\mbox{for $a<\alpha$ arbitrarily close to $\alpha$},
\]
where the implied constant is independent of $d$ if
\begin{equation} \label{eq:sum-weights}
  \sum_{j=1}^\infty \gamma_j^{1/\alpha} \,<\, \infty.
\end{equation}
(In the literature, the exponent in~\eqref{eq:sum-weights} sometimes differs
by a factor of two, depending on how $\alpha$ and $\bsgamma$ enter the
definition of the norm, see Section~\ref{sec:spaces}.)

Additionally we will analyze the \emph{randomized lattice algorithm
with shift}, which is defined by
\[
 \widetilde{M}_n(f) \,:=\, M_n\bigl( f(\{\cdot+\bsU\}) \bigr),
\]
where $\bsU$ is a random variable that is uniformly distributed on
$[0,1]^d$, and the braces mean to take the fractional part as in
\eqref{eq:lr}. Again, the algorithm uses at most $n$ function evaluations.
The advantage of the algorithm $\widetilde{M}_n$ is that it can treat the
case $\alpha \in(0,1/2]$, i.e., when the integrands are not necessarily
continuous. In Theorem~\ref{thm2} below, we prove for $\alpha>0$ that
\[
  e^{\rm ran}_{d,\alpha,\bsgamma}(\widetilde{M}_n)
  \,=\, \calO( n^{-a-1/2})
  \qquad\mbox{for $a<\alpha$ arbitrarily close to $\alpha$},
\]
where again the implied constant is independent of $d$ if
\eqref{eq:sum-weights} holds. Moreover, the result also holds with the
randomized error replaced by the root-mean-squared error of
$\widetilde{M}_n$, which is larger.

Then, we show that the upper bounds for the algorithms $M_n$ and
$\widetilde{M}_n$ are essentially best possible, by proving in
Theorem~\ref{thm3} the lower bound
\[
  \frac{\gamma_1}{2} \frac{\sqrt{\log n}}{n^{\alpha+1/2}}
\]
for the randomized errors of the above algorithms.

The presented upper bounds are almost optimal because the optimal order of
convergence that can be achieved by any randomized algorithm which uses
only function values is $\Theta( n^{-\alpha-1/2})$, even in the unweighted
situation. This has been proven only recently by one of the
authors~\cite{MU17} using the algorithm of~\cite{KN16}. However, the
algorithm in~\cite{MU17} is quite impractical in high dimensions, in
contrast to the algorithm that will be analyzed in the following. This
mainly comes from the fact that the involved point sets, i.e., certain
subsets of irrational lattices in $\RR^d$, seem to be very hard to
implement, see, e.g.,~\cite{KOU17,UU16}. Moreover, the presented upper
bound in~\cite{MU17} is at least exponential in $d$ and probably not
improvable, see~\cite[Remark 3.2]{MU17}.

The optimal order $\Theta( n^{-\alpha-1/2})$ in the randomized setting,
which holds for arbitrary $\alpha\ge0$, should be compared to the optimal
order $\Theta( n^{-\alpha}(\log n)^{(d-1)/2})$ in the deterministic
setting, which can only hold if $\alpha>1/2$, see \cite{By85, Fr76} or
\cite{Te93,UU16} and the references therein. For $\alpha\le1/2$,
deterministic methods do not converge at all in the worst case setting, as
the functions are possibly not continuous. A tutorial on the proof of the
upper bound can be found in~\cite{MU15}. By now there are many known
constructions for \emph{optimal} algorithms if $\alpha\le1$, see,
e.g.,~\cite{DP10,N92,NW10}, while for $\alpha>1$ they are still rare, see
\cite{Fr76,GSY16}. By introducing weights in these function spaces, one
can achieve the rate $\calO(n^{-a})$ for $a<\alpha$, with the implied
constant independent of $d$, see~\cite[Theorem~3]{SW01}. For a
component-by-component construction that achieves this bound, see, e.g.,
\cite{DKS13,K03}.

The rest of the paper is structured as follows. In Section \ref{sec:det},
we define the function space, summarize some known results from the
deterministic error setting and show a few auxiliary results that will be
needed in this paper. In Section \ref{sec:rand} we state and prove the
main results, Theorems \ref{thm1} and \ref{thm2}, while in Section
\ref{sec:lower} we show a lower bound for the specific algorithms
considered in this paper, see Theorem \ref{thm3}. The independence of the
error bounds on the dimension is the essence of \emph{strong
tractability}, and we will provide a brief discussion about this in
Section~\ref{sec:tractresults}. In particular, again under a suitable
summability condition on the weights, the exponent of strong tractability
in the randomized setting equals the optimal value of $1/(\alpha+1/2)$. We
end the paper with a short conclusion in Section \ref{sec:concl}.

\goodbreak

\section{Previous and auxiliary results} \label{sec:det}

In this section we define the function space and review some known
results for the error of lattice rules in the deterministic setting (this
includes the definitions necessary for the analysis), and we also prove a
few helpful results.

\subsection{Function spaces}\label{sec:spaces}

Let us now define the function spaces under consideration in this paper.
These spaces are so-called \emph{Sobolev spaces with dominating mixed smoothness}, where we assume
(product) weights as introduced by Sloan and Wo\'{z}niakowski in
\cite{SW98}.
The basic idea of weighted spaces is that the weights,
occurring in the norm of the space, allow us to model different influence
of different coordinates on the integration problem, where larger weights
mean more influence and smaller weights mean less influence.

Let $\alpha\ge0$, let $\bsgamma=(\gamma_j)_{j\ge 1}$ be a non-increasing
sequence of positive weights bounded by 1, and let $d\in\NN$. We denote by
$\calH_{d,\alpha,\bsgamma}$ the Sobolev space of functions
defined on
$[0,1]^d$, with finite norm
\[
  \|f\|_{d,\alpha,\bsgamma}
  \,:=\, \left(
  \sum_{\bsh\in\bbZ^d} \big|r_{\alpha,\bsgamma}(\bsh)\,\hat{f}(\bsh)\big|^2\right)^{1/2}\,,
\]
where $\hat{f}(\bsh):=\int_{[0,1]^d}f(\bsx)\, e^{-2\pi i
\bsh\cdot\bsx}\rd\bsx$, with ``$\cdot$'' denoting the usual Euclidean
inner product, and
\[
 r_{\alpha,\bsgamma}(\bsh) \,=\, \prod_{j=1}^d r_{\alpha,\gamma_j} (h_j),
\]
with
\[
 r_{\alpha,\gamma_j}(h_j) \,:=\, \max\left\{1,\,\frac{\abs{h_j}^{\alpha}}{\gamma_j}\right\}.
\]

It is known that $\calH_{d,\alpha,\bsgamma}\subseteq L_2 ([0,1]^d)$ (with
equality for $\alpha=0$), and that its elements can be expressed in terms
of their Fourier series, i.e.,
\[
 f(\bsx)
 \,=\, \sum_{\bsh\in\ZZ^d} \hat{f}(\bsh)\, e^{2\pi\icomp \bsh\cdot\bsx}
 \quad\mbox{for}\quad f\in \calH_{d,\alpha,\bsgamma}.
\]
Note that the convergence of the Fourier series holds pointwise if
$\alpha>1/2$, and in this case $\calH_{d,\alpha,\bsgamma}$ consists of
continuous and periodic functions.
In the recent literature on lattice rules,
$\calH_{d,\alpha,\bsgamma}$ with $\alpha>1/2$ is
often called \emph{weighted Korobov space}. If
$\alpha\le1/2$, then the Fourier series of a function in
 $\calH_{d,\alpha,\bsgamma}$ does not necessarily converge pointwise,
and the above equation is to be understood almost everywhere, which is
enough for our purposes.

It is straightforward to check that, for $\alpha\in\NN$,
\[
\|f\|_{d,\alpha,\bsgamma}^2 \,=\,
\sum_{\setu\subseteq \{1:d\}} (2\pi)^{-2\alpha|\setu|} \Bigg(\prod_{j\in\setu}\,\gamma_j^{-2}\Bigg)
	\Bigg( \int_{[0,1]^{|\setu|}} \Bigg| \int_{[0,1]^{d-|\setu|}}
	\Bigg(\prod_{j\in\setu}\frac{\partial}{\partial x_j}\Bigg)^\alpha f(\bsx)\,
	\rd\bsx_{\{1:d\}\setminus\setu} \Bigg|^2 \rd\bsx_{\setu} \Bigg),
\]
where $\{1:d\}:=\{1,\dots,d\}$ and $\bsx=(x_1,\dots,x_d)$ with
$\bsx_{\setu}$ and $\bsx_{\{1:d\}\setminus\setu}$ denoting the projection
of $\bsx$ onto the coordinates in $\setu$ and $\{1:d\}\setminus\setu$,
respectively.

\begin{remark} If we replace the definition of
$r_{\alpha,\gamma_j}(h_j)$ by
\[
 r^{*}_{\alpha,\gamma_j}(h_j) \,:=\, \sqrt{1+\frac{|2\pi h_j|^{2\alpha}}{\gamma_j^2}},
\]
then, for $\alpha\in\NN$, we obtain the norm
\[
 \|f\|_{*}^2
 \,=\, \sum_{\setu\subseteq\{1:d\}}
 \Bigg(\prod_{j\in\setu}\gamma_j^{-2}\Bigg) \;
 \Bigg\|\Bigg(\prod_{j\in\setu}\frac{\partial}{\partial x_j}\Bigg)^\alpha f\,\Bigg\|_{L_2([0,1]^d)}^2
 \,=\, \sum_{\bsb\in\{0,1\}^d} \bsgamma^{-2\bsb}\, \| D^{\alpha\bsb}f\|_{L_2([0,1]^d)}^2,
\]
with $\bsgamma^{-2\bsb}=\prod_{j=1}^d \gamma_j^{-2b_j}$, which recovers
the standard norm for the Sobolev space when $\bsgamma\equiv\bsone$. We
could obtain the latter upper error bounds for this norm in the same way,
since $r_{\alpha,\gamma_j}(h_j)\le r^*_{\alpha,\gamma_j}(h_j)$, which
implies $\|f\|_{d,\alpha,\bsgamma}\le\|f\|_*$.
\end{remark}

\subsection{Lattice rule error}

For a prime $p$ and $\bsz=(z_1,\ldots,z_d)\in\{1:p-1\}^d$,
where $\{1:p-1\} := \{1,2,\ldots,p-1\}$, it easily follows that the
error of a (rank-1) lattice rule $Q_{d,p,\bsz}$ is given by
\begin{equation}\label{eq:lattice-error}
Q_{d,p,\bsz}(f) - I_d(f) \,=\,
  \sum_{\satop{\bsh\in\bbZ^d\setminus\{\bszero\}}{\bsh\cdot\bsz \equiv_p 0}}
  \hat{f}(\bsh)\, ,
\end{equation}
provided that $\sum_{\bsh \in \ZZ^d} |\hat{f}(\bsh)| < \infty$, where
$\bsh\cdot\bsz \equiv_p 0$ means that $\bsh\cdot\bsz$ is congruent to zero
modulo~$p$. Hence, the performance of a lattice rule $Q_{d,p,\bsz}$
depends solely on the structure of the \emph{dual lattice}, i.e., the set
of all $\bsh\in\ZZ^d$ such that $\bsh\cdot\bsz \equiv_p 0$. There are
several measures, or \emph{figures of merit}, for the quality of a lattice
rule, see, e.g.~\cite{Ni78,N92}.

One figure of merit is
\[
  P_{\alpha,\bsgamma}(p,\bsz) \,:=\,
  \sum_{\satop{\bsh\in\bbZ^d\setminus\{\bszero\}}{\bsh\cdot\bsz \equiv_p 0}}
  \frac{1}{r_{\alpha,\bsgamma}(\bsh)},
\]
for which the unweighted version was defined independently by Hlawka in
\cite{H62} and Korobov in \cite{K59}. This quantity is the worst case
error in the class of functions $E_{\alpha,\bsgamma}$, which consists of
all functions $f$ with $|\hat{f}(\bsh)|\le r_{\alpha,\bsgamma}(\bsh)^{-1}$
for all $\bsh\in\ZZ^d$. For the weighted Sobolev space that is considered
in this paper, we clearly obtain by the Cauchy--Schwarz inequality that
\begin{equation}\label{eq:error-det}
\begin{split}
\abs{Q_{d,p,\bsz}(f) - I_d(f)}
	\,&\le\, \|f\|_{d,\alpha,\bsgamma}\,\Bigg(
		\sum_{\satop{\bsh\in\bbZ^d\setminus\{\bszero\}}{\bsh\cdot\bsz \equiv_p 0}}
		\frac{1}{[r_{\alpha,\bsgamma}(\bsh)]^2}
	\Bigg)^{1/2}\\
	\,&=\, \|f\|_{d,\alpha,\bsgamma}\,\sqrt{P_{2\alpha,\bsgamma^2}(p,\bsz)},
\end{split}
\end{equation}
where we used that
$[r_{\alpha,\bsgamma}(\bsh)]^2=r_{2\alpha,\bsgamma^2}(\bsh)$. It is easily
seen that equality holds in \eqref{eq:error-det} for some (worst case)
function $f$, and hence we conclude that the (deterministic) worst case
error of a single lattice rule $Q_{d,p,\bsz}$ is precisely
\begin{equation} \label{eq:wor-err}
 e^{\rm wor}_{d,\alpha,\bsgamma}(Q_{d,p,\bsz}) \,:=\,
 \sup_{\substack{f\in \calH_{d,\alpha,\bsgamma}\\ \|f\|_{d,\alpha,\bsgamma}\le 1}}
 \left| Q_{d,p,\bsz}(f)- I_d (f) \right|
 \,=\,
 \sqrt{P_{2\alpha,\bsgamma^2}(p,\bsz)}.
\end{equation}

Another relevant figure of merit is
\[
\rho_{\alpha,\bsgamma}(p,\bsz) \,:=\,
\min_{\satop{\bsh\in\bbZ^d\setminus\{\bszero\}}{\bsh\cdot\bsz \equiv_p 0}}\,
r_{\alpha,\bsgamma}(\bsh),
\]
which is a weighted version of the \emph{Zaremba index} for higher
smoothness. Clearly,
\begin{equation}\label{eq:Zaremba}
 \frac{1}{\rho_{\alpha,\bsgamma}(p,\bsz)} \,<\, P_{\alpha,\bsgamma}(p,\bsz).
\end{equation}

\subsection{The existence of good generating vectors}

Here, we show by a standard averaging argument that there exist generating
vectors which make the worst case error of the corresponding lattice rule
small. In addition, we show that many such vectors exist, which will be
essential for the proof of our main result. Recall that, for $\bsh\in
\ZZ^d$, we have
\[
 r_{\alpha,\bsgamma}(\bsh) \,=\, \prod_{j=1}^d
\max\Bigl\{1,\, |h_j|^\alpha/\gamma_j\Bigr\}.
\]

We start with some auxiliary results.

\begin{lemma}\label{lem:sum-r}
Let $d\in\NN$, $\beta>1$, and $\bsgamma\in(0,1]^\NN$.
Then we have
\begin{equation}\label{eq:lemsum-r}
\sum_{\bsh\in\ZZ^d} \frac{1}{r_{\beta,\bsgamma}(\bsh)}
\,=\, \prod_{j=1}^d\left(1+2\gamma_j\, \zeta(\beta)\right),
\end{equation}
where $\zeta$ denotes the Riemann zeta function.
\end{lemma}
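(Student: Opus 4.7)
The plan is to exploit the product structure of $r_{\beta,\bsgamma}$ to factor the sum over $\ZZ^d$ into a product of one-dimensional sums, then evaluate each such sum by splitting off the $h=0$ term.

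First, I would observe that since $r_{\beta,\bsgamma}(\bsh) = \prod_{j=1}^d \max\{1,|h_j|^\beta/\gamma_j\}$ is multiplicative across the coordinates and all terms are positive, Tonelli's theorem (or simply rearrangement of an absolutely convergent series, whose convergence will be justified at the end) gives
\[
\sum_{\bsh\in\ZZ^d} \frac{1}{r_{\beta,\bsgamma}(\bsh)}
\,=\, \prod_{j=1}^d \sum_{h\in\ZZ} \frac{1}{\max\{1,\,|h|^\beta/\gamma_j\}}.
\]

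Next I would evaluate the inner sum for each fixed $j$. Because $0<\gamma_j\le 1$, we have $|h|^\beta/\gamma_j \ge 1$ whenever $|h|\ge 1$, so $\max\{1,|h|^\beta/\gamma_j\} = |h|^\beta/\gamma_j$ for $h\ne 0$, while the $h=0$ term contributes exactly $1$. Splitting the sum into $h=0$ and $h\ne 0$ and using the symmetry $h\leftrightarrow -h$ yields
\[
\sum_{h\in\ZZ} \frac{1}{\max\{1,\,|h|^\beta/\gamma_j\}}
\,=\, 1 + 2\gamma_j \sum_{h=1}^\infty \frac{1}{h^\beta}
\,=\, 1 + 2\gamma_j\,\zeta(\beta),
\]
where the assumption $\beta>1$ guarantees convergence of $\zeta(\beta)$. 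Taking the product over $j=1,\dots,d$ gives \eqref{eq:lemsum-r}, and the same bound retroactively justifies the interchange in the first step.

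There really is no obstacle here beyond bookkeeping; the key points to get right are (i) invoking $\gamma_j\le 1$ to simplify the $\max$ for nonzero $h$, and (ii) the $\beta>1$ hypothesis, which is exactly what makes the one-dimensional sum finite (and hence the whole $d$-fold product finite, with no dependence on $d$ beyond the product itself).
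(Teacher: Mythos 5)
Your proof is correct and is exactly the elementary computation the paper has in mind (the paper's own proof is just the remark that the result ``follows easily by the definition''). The two key observations you flag — that $\gamma_j\le 1$ collapses the $\max$ to $|h|^\beta/\gamma_j$ for $h\ne 0$, and that $\beta>1$ gives convergence — are precisely what is needed.
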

\begin{proof}
The result follows easily by the definition of the function $r_{\beta,\bsgamma}$.
\end{proof}

The quantity on the right-hand side of \eqref{eq:lemsum-r} will be crucial
in the following computations. To simplify notation, we define, for
$d\in\NN$, $\beta>1$, and $\bsgamma\in(0,1]^\NN$,
\begin{equation}\label{eq:V}
V_d(\beta,\bsgamma) \,:=\,
 3 \sum_{\bsh\in\ZZ^d} \frac{1}{r_{\beta,\bsgamma}(\bsh)} \,=\,
 3 \prod_{j=1}^d\left(1+2\gamma_j\, \zeta(\beta)\right)
 \,\le\, 3 \exp\left(2\,\zeta(\beta)\sum_{j=1}^d\gamma_j \right),
\end{equation}
where we used $1+x\le e^x$ for $x\in\RR$. Note that $ V_d(\beta,\bsgamma)$
is bounded independently of $d$ for all $\beta>1$ if $\sum_{j=1}^\infty
\gamma_j < \infty$.

As a direct consequence of Lemma \ref{lem:sum-r} we obtain a bound on the
number of $\bsh\in\ZZ^d$ such that $r_{\beta,\bsgamma}(\bsh)$ is small.
\begin{corollary}\label{cor:number}
Let $d\in\NN$, $\beta>1$, $T>0$, and $\bsgamma\in(0,1]^\NN$, and define
\[
\calA_{\beta,\bsgamma}(T)
\,:=\, \{\bsh\in\ZZ^d\colon\, r_{\beta,\bsgamma}(\bsh) \,\le\, T \}.
\]
Then we have
\[
\abs{\calA_{\beta,\bsgamma}(T)} \,\le\, T\, V_d(\beta,\bsgamma).
\]
\end{corollary}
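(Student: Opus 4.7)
The plan is to deduce this counting bound directly from Lemma \ref{lem:sum-r} via a standard Chebyshev-type argument. The crucial structural fact is that, by its definition as a product of maxima with $1$, the weight $r_{\beta,\bsgamma}(\bsh)$ is always at least $1$ for every $\bsh\in\ZZ^d$, so it is a legitimate ``size'' of $\bsh$ that we can compare with the threshold $T$.

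First I would observe that for any $\bsh\in\calA_{\beta,\bsgamma}(T)$ the defining inequality $r_{\beta,\bsgamma}(\bsh)\le T$ can be rewritten as $1\le T/r_{\beta,\bsgamma}(\bsh)$. Summing over $\bsh\in\calA_{\beta,\bsgamma}(T)$ and then extending the sum to all of $\ZZ^d$ (this is valid because every summand $1/r_{\beta,\bsgamma}(\bsh)$ is nonnegative) gives
\[
\abs{\calA_{\beta,\bsgamma}(T)}
\,=\,\sum_{\bsh\in\calA_{\beta,\bsgamma}(T)} 1
\,\le\, T \sum_{\bsh\in\calA_{\beta,\bsgamma}(T)} \frac{1}{r_{\beta,\bsgamma}(\bsh)}
\,\le\, T \sum_{\bsh\in\ZZ^d} \frac{1}{r_{\beta,\bsgamma}(\bsh)}.
\]

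Next I would invoke Lemma \ref{lem:sum-r} to evaluate the last sum as $\prod_{j=1}^d(1+2\gamma_j\zeta(\beta))$, which by the definition \eqref{eq:V} of $V_d(\beta,\bsgamma)$ equals $V_d(\beta,\bsgamma)/3$. Since $V_d(\beta,\bsgamma)/3 \le V_d(\beta,\bsgamma)$, the claimed bound $\abs{\calA_{\beta,\bsgamma}(T)}\le T\,V_d(\beta,\bsgamma)$ follows immediately.

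There is no real obstacle here; the only subtlety is to notice that $r_{\beta,\bsgamma}(\bsh)\ge 1$ (otherwise the step $1\le T/r_{\beta,\bsgamma}(\bsh)$ would require $T\ge r_{\beta,\bsgamma}(\bsh)$ in a nontrivial direction, but it is automatic from the $\max\{1,\cdot\}$ in the definition). The factor $3$ in $V_d(\beta,\bsgamma)$ is not needed for this corollary; it is slack that will presumably be consumed later in the paper.
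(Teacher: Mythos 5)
Your proof is correct and follows the same Chebyshev/Markov-type argument as the paper, which derives $V_d(\beta,\bsgamma) \ge \sum_{\bsh\in\calA_{\beta,\bsgamma}(T)} 1/r_{\beta,\bsgamma}(\bsh) \ge |\calA_{\beta,\bsgamma}(T)|/T$ directly from Lemma~\ref{lem:sum-r}. One minor quibble: your closing remark about needing $r_{\beta,\bsgamma}(\bsh)\ge 1$ is not really the point; the step $1\le T/r_{\beta,\bsgamma}(\bsh)$ follows solely from $r_{\beta,\bsgamma}(\bsh)\le T$ (i.e.\ membership in $\calA_{\beta,\bsgamma}(T)$) together with positivity of $r_{\beta,\bsgamma}$, and the lower bound $r_{\beta,\bsgamma}\ge 1$ plays no role here.
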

\begin{proof}
From Lemma~\ref{lem:sum-r} we obtain
\[
V_d(\beta,\bsgamma) \,\ge\, \sum_{\bsh\in\calA_{\beta,\bsgamma}(T)} \frac{1}{r_{\beta,\bsgamma}(\bsh)}
\,\ge\, \frac{|\calA_{\beta,\bsgamma}(T)|}{T}.
\]
This proves the result.
\end{proof}

\medskip

The next lemma is useful in bounding the number of points in the dual lattice
for a given generating vector.

\begin{lemma}\label{lem:divisor}
For every prime number $p$, every $d\in\NN$, and $\bsh\in\ZZ^d$ we have
\[
\#\bigl\{\bsz\in \{1:p-1\}^d\colon\,\bsh\cdot\bsz \equiv_p 0\bigl\}\;\le\begin{cases}
(p-1)^{d} & \text{ if }\, \bsh\equiv_p\bszero,\\
(p-1)^{d-1} & \text{ otherwise,}
\end{cases}
\]
with equality in the first case,
where by $\bsh\equiv_p\bszero$ we mean that each component of $\bsh$ is
congruent to zero modulo $p$.
\end{lemma}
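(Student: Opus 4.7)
The plan is to split into the two cases flagged by the statement and handle each directly. The first case is essentially trivial: if every component of $\bsh$ is divisible by $p$, then for any $\bsz \in \{1:p-1\}^d$ we have $\bsh \cdot \bsz \equiv 0 \pmod p$, so the count equals $(p-1)^d$. This already gives the claimed equality in the first case.

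For the second case, I would fix an index $j^* \in \{1:d\}$ with $h_{j^*} \not\equiv 0 \pmod p$, which exists by assumption. Since $p$ is prime and $h_{j^*} \not\equiv 0 \pmod p$, the element $h_{j^*}$ is invertible in $\ZZ/p\ZZ$. For any choice of the remaining components $(z_j)_{j \neq j^*} \in \{1:p-1\}^{d-1}$, the congruence $\bsh \cdot \bsz \equiv 0 \pmod p$ rewrites as
\[
h_{j^*}\, z_{j^*} \,\equiv\, -\sum_{j \neq j^*} h_j\, z_j \pmod p,
\]
which uniquely determines $z_{j^*}$ modulo $p$. Thus at most one value of $z_{j^*}$ in $\{0,1,\ldots,p-1\}$ solves the congruence, so at most one value in $\{1:p-1\}$ does (the unique solution might happen to be $0$, which is excluded). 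Summing over the $(p-1)^{d-1}$ choices for the other components yields the bound $(p-1)^{d-1}$.

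There is really no obstacle here: the argument is the standard observation that a single nontrivial linear equation over the field $\FF_p$ in $d$ variables cuts out a hyperplane, which has $p^{d-1}$ solutions in $\FF_p^d$, and restricting each coordinate away from $0$ only decreases the count. The only small subtlety worth mentioning is that the unique lift of $z_{j^*}$ to $\{0,\ldots,p-1\}$ may equal $0$, so one genuinely gets an inequality rather than an equality in the second case; this is consistent with the statement, which claims equality only when $\bsh \equiv_p \bszero$.
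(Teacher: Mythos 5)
Your proof is correct and follows essentially the same approach as the paper: handle $\bsh \equiv_p \bszero$ trivially, and otherwise fix a component with $h_{j^*}$ invertible modulo $p$ and observe that it uniquely determines $z_{j^*}$ modulo $p$ once the other coordinates are fixed. Your added remark about why one only gets an inequality in the second case (the forced value of $z_{j^*}$ may be $0$) is a nice clarification that the paper leaves implicit.
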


\begin{proof}
Recall that $\bsh\cdot\bsz=\sum_{j=1}^d h_j z_{j}$. If
$\bsh\equiv_p\bszero$, then every $\bsz\in\{1:p-1\}^d$ is a solution to
$\bsh\cdot\bsz \equiv_p 0$. Hence, there are $(p-1)^{d}$ solutions in this
case. Otherwise, there is an $\ell\in\{1:d\}$ such that
$h_\ell\not\equiv_p0$. Now, for any fixed $z_j$, $j\neq\ell$, we have
$\bsh\cdot\bsz\equiv_p0$ if and only if $h_\ell z_\ell \equiv_p
-\sum_{j\neq\ell} h_j z_{j}$. Since $h_\ell\not\equiv_p0$ and $p$ is prime
there is at most one such $z_\ell\in\{1:p-1\}$. This shows that there are
no more than $(p-1)^{d-1}$ solutions to $\bsh\cdot\bsz \equiv_p 0$ in this
case.
\end{proof}

A similar result to the following proposition can be found in many papers,
see, e.g., \cite[Lemma~2]{SW01}.

\begin{proposition}\label{prop:av-z}
Let $p$ be prime, $d\in\NN$, and $\bsgamma\in(0,1]^\NN$.
For any $\beta>1$, we have
\[
  \frac{1}{(p-1)^d} \sum_{\bsz\in\{1:p-1\}^d} P_{\beta,\bsgamma}(p,\bsz)
	\;\le\; \frac{V_d(\beta,\bsgamma)}{p}\,.
\]
\end{proposition}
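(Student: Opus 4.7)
The plan is to exchange the order of summation and then control the number of valid $\bsz$'s for each dual-lattice point via Lemma \ref{lem:divisor}. Starting from the definition,
\[
\sum_{\bsz \in \{1:p-1\}^d} P_{\beta,\bsgamma}(p,\bsz)
\;=\; \sum_{\bsh \in \ZZ^d \setminus \{\bszero\}} \frac{1}{r_{\beta,\bsgamma}(\bsh)}\,
\#\bigl\{\bsz \in \{1:p-1\}^d : \bsh\cdot\bsz \equiv_p 0\bigr\},
\]
Lemma \ref{lem:divisor} bounds the inner count by $(p-1)^d$ when $\bsh \equiv_p \bszero$ and by $(p-1)^{d-1}$ otherwise. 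After dividing by $(p-1)^d$, I obtain an upper bound of the form $S_1 + S_2/(p-1)$, where $S_1$ is the sum of $1/r_{\beta,\bsgamma}(\bsh)$ over nonzero $\bsh$ with $\bsh \equiv_p \bszero$, and $S_2$ is the analogous sum over $\bsh \not\equiv_p \bszero$.

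The decisive step is to extract an extra factor $p^{-\beta}$ from $S_1$. Writing $\bsh = p\bsk$ with $\bsk \in \ZZ^d \setminus \{\bszero\}$ and noting that $|k_j| \ge 1 \ge \gamma_j$ forces the max in the definition of $r_{\beta,\gamma_j}$ to be attained by its second argument both at $k_j$ and at $pk_j$ (whenever $k_j \ne 0$), one gets the clean identity
\[
r_{\beta,\bsgamma}(p\bsk) \;=\; p^{\beta\, m(\bsk)}\, r_{\beta,\bsgamma}(\bsk), \qquad m(\bsk) := \#\{j : k_j \ne 0\} \ge 1.
\]
Summing over $\bsk \ne \bszero$ and applying Lemma \ref{lem:sum-r} yields $S_1 \le p^{-\beta}\bigl(\tfrac{1}{3} V_d(\beta,\bsgamma) - 1\bigr)$, while the same lemma trivially gives $S_2 \le \tfrac{1}{3} V_d(\beta,\bsgamma) - 1$.

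To finish I would invoke $\beta > 1$ and $p \ge 2$, which give $p^{-\beta} \le p^{-1}$. Writing $A := \tfrac{1}{3} V_d(\beta,\bsgamma)$, the claim reduces to the elementary inequality
\[
\frac{A-1}{p} + \frac{A-1}{p-1} \;\le\; \frac{3A}{p},
\]
which rearranges to $A(2-p) \le 2p - 1$ and is trivial for every prime $p \ge 2$. The main obstacle is conceptual rather than computational: one must see that the hypothesis $\beta > 1$ does \emph{double duty} here---it keeps $V_d$ finite via $\zeta(\beta)$ and simultaneously supplies the saving $p^{-\beta} \le p^{-1}$ in the diagonal contribution $S_1$. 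Without this refinement, the naive estimate $S_1 \le A - 1$ is too large to be absorbed into the target $V_d/p$.
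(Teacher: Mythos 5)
Your proof is correct and follows essentially the same path as the paper: exchange the order of summation, split over $\bsh\equiv_p\bszero$ versus $\bsh\not\equiv_p\bszero$ via Lemma~\ref{lem:divisor}, extract the $p^{-\beta}$ saving from the diagonal sum, and combine. Your slightly sharper bookkeeping (the exact factorization $r_{\beta,\bsgamma}(p\bsk)=p^{\beta m(\bsk)}r_{\beta,\bsgamma}(\bsk)$ and the $-1$ correction from the $\bsh=\bszero$ term) is valid but unnecessary; the paper reaches the same conclusion more quickly by dropping those refinements and closing with $1/(p-1)\le 2/p$.
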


\begin{proof}
By definition we have
\begin{align*}
\frac{1}{(p-1)^d} \sum_{\bsz\in\{1:p-1\}^d} P_{\beta,\bsgamma}(p,\bsz)
 &\,=\,
 \frac{1}{(p-1)^d}\sum_{\bsz\in\{1:p-1\}^d}
 \sum_{\substack{\bsh\in\ZZ^d\setminus \{\bszero\}\\ \bsh\cdot\bsz\equiv_p 0}} \frac{1}{r_{\beta,\bsgamma}(\bsh)} \\
 &\,=\, \frac{1}{(p-1)^d}\sum_{\bsh\in\ZZ^d\setminus\{\bszero\}}
  \frac{\#\bigl\{\bsz\in\{1:p-1\}^d\colon\,\bsh\cdot\bsz \equiv_p 0\bigl\}}{r_{\beta,\bsgamma}(\bsh)}\,.
\end{align*}
We split this sum into a sum over all $\bsh$ with $\bsh\equiv_p \bszero$ and the
remaining $\bsh$, and bound the sums separately.
By Lemma~\ref{lem:divisor} we have for the sum over $\bsh\equiv_p \bszero$,
\begin{align*}
 \frac{1}{(p-1)^d}\sum_{\substack{\bsh\in\ZZ^d\setminus\{\bszero\}\\ \bsh\equiv_p \bszero}}
 \frac{\#\bigl\{\bsz\in\{1:p-1\}^d\colon\,\bsh\cdot\bsz \equiv_p 0\bigl\}}{r_{\beta,\bsgamma}(\bsh)}
 &\,=\,
 \sum_{\substack{\bsh\in\ZZ^d\setminus\{\bszero\}\\ \bsh\equiv_p \bszero}}
 \frac{1}{r_{\beta,\bsgamma}(\bsh)}\\
 &\,=\,
 \sum_{\bsl\in\ZZ^d\setminus\{\bszero\}} \frac{1}{r_{\beta,\bsgamma}(p\, \bsl)}\\
 &\,\le\, \frac{1}{p^{\beta}} \sum_{\bsl\in\ZZ^d\setminus\{\bszero\}}
	\frac{1}{r_{\beta,\bsgamma}(\bsl)}\\
 &\,\le\, \frac{1}{3p}\, V_d(\beta,\bsgamma).
\end{align*}
The penultimate inequality easily follows from the definition of
$r_{\beta,\bsgamma}(\bsh)$ and $\bsh\neq\bszero$,
and the last inequality follows from Lemma~\ref{lem:sum-r}.
For the sum over $\bsh\not\equiv_p \bszero$ we use again
Lemma~\ref{lem:sum-r} and Lemma~\ref{lem:divisor}, and obtain
\[\begin{split}
 \frac{1}{(p-1)^d}\sum_{\substack{\bsh\in\ZZ^d\setminus\{\bszero\}\\ \bsh\not\equiv_p \bszero}}
 \frac{\#\bigl\{\bsz\in\{1:p-1\}^d\colon\,\bsh\cdot\bsz \equiv_p 0\bigl\}}{r_{\beta,\bsgamma}(\bsh)}
 \,&\le\, \frac{1}{p-1}\sum_{\bsh\in\ZZ^d} \frac{1}{r_{\beta,\bsgamma}(\bsh)} \\
 \,&=\, \frac{1}{3(p-1)}\, V_d(\beta,\bsgamma).
\end{split}\]
Combining the last two estimates with $1/(p-1)\le2/p$ leads to the
desired result.
\end{proof}

\bigskip

Clearly, there must be one choice of $\bsz$ such that
$P_{\beta,\bsgamma}(p,\bsz)$ is as good as the average. By this argument,
and using \eqref{eq:error-det}, it is typically shown that there exists a
generating vector that makes the worst case error small. Although this is
clearly not a constructive argument, there are methods to generate such
vectors in an efficient way, in particular component-by-component
algorithms. These constructions, dating back to Korobov, were re-invented
in 2002 in \cite{SR02}, and proven to yield optimal results in \cite{K03}.
Moreover, there exists a fast variant due to \cite{NC06} which is heavily
used nowadays.

For the upcoming analysis, it is not enough to have a single ``good''
generating vector. However, as the next corollary shows, there are
actually many of them.
\begin{corollary} \label{cor:good-z}
Let $p$ be prime, $d\in\NN$, and $\bsgamma\in(0,1]^\NN$. For any $\beta>1$
and $\tau\in (0,1)$, there exist at least $\lceil \tau(p-1)^d \rceil$
generating vectors $\bsz\in\{1:p-1\}^d$ such that
\begin{equation*} \label{eq:good-z}
  P_{\beta,\bsgamma}(p,\bsz)
  \,\le\, \frac{1}{1-\tau}\cdot\frac{V_d(\beta,\bsgamma)}{p}\,.
\end{equation*}
\end{corollary}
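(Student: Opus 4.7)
The plan is to derive the corollary from Proposition~\ref{prop:av-z} by a standard Markov-type (pigeonhole) argument, using only that $P_{\beta,\bsgamma}(p,\bsz)\ge 0$ for every $\bsz\in\{1:p-1\}^d$.

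First, I would assume toward a contradiction that strictly fewer than $\lceil \tau(p-1)^d\rceil$ vectors satisfy the stated bound. Then strictly more than $(1-\tau)(p-1)^d$ vectors in $\{1:p-1\}^d$ satisfy
\[
  P_{\beta,\bsgamma}(p,\bsz) \,>\, \frac{1}{1-\tau}\cdot\frac{V_d(\beta,\bsgamma)}{p}.
\]
Since every term $P_{\beta,\bsgamma}(p,\bsz)$ in the sum over all of $\{1:p-1\}^d$ is nonnegative, I would drop the terms coming from the ``good'' $\bsz$ and bound the full average from below by the contribution of these ``bad'' $\bsz$ alone:
\[
  \frac{1}{(p-1)^d}\sum_{\bsz\in\{1:p-1\}^d} P_{\beta,\bsgamma}(p,\bsz)
  \,>\, \frac{1}{(p-1)^d}\cdot (1-\tau)(p-1)^d \cdot \frac{1}{1-\tau}\cdot\frac{V_d(\beta,\bsgamma)}{p}
  \,=\, \frac{V_d(\beta,\bsgamma)}{p},
\]
which directly contradicts Proposition~\ref{prop:av-z}. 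Hence at least $\lceil \tau(p-1)^d\rceil$ generating vectors satisfy the claimed inequality.

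There is really no hard step here; the argument is just Markov's inequality tailored to a uniform distribution on $\{1:p-1\}^d$, combined with the average bound already established. The only thing to be mildly careful about is the strict-vs-nonstrict inequalities and the ceiling: writing the threshold as $\lceil\tau(p-1)^d\rceil$ forces the above strict inequality in the counting step, which is what produces the desired contradiction.
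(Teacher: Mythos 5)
Your proof is correct and follows essentially the same contrapositive/Markov-type argument as the paper: assume too few good vectors, lower-bound the average of $P_{\beta,\bsgamma}(p,\bsz)$ by the contribution of the bad ones, and contradict Proposition~\ref{prop:av-z}. The ceiling bookkeeping you flag is exactly the point the paper also handles (via $\chi\le\lceil\tau(p-1)^d\rceil-1\le\tau(p-1)^d$), so there is no meaningful difference.
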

\begin{proof}
Suppose to the contrary that $P_{\beta,\bsgamma}(p,\bsz)$ is smaller than
or equal to the right-hand side above for $\chi$ choices of $\bsz$, where
$\chi\le\lceil \tau(p-1)^d \rceil-1 < \tau(p-1)^d$. The number of $\bsz$
such that $P_{\beta,\bsgamma}(p,\bsz)$ is larger, i.e., $(p-1)^d-\chi$,
therefore satisfies $(p-1)^d-\chi > (1-\tau)(p-1)^d$. Then the average
over all $\bsz$ is bigger than $V_d(\beta,\bsgamma)/p$, which contradicts
Proposition~\ref{prop:av-z}.
\end{proof}

Most of the results discussed so far in this section hold for arbitrary
$\beta>1$. However, for the further analysis we need a bound on
$\rho_{\alpha,\bsgamma}(p,\bsz)$ for many $\bsz$ for all $\alpha>0$. It is
known that the classical Zaremba index, corresponding to $\alpha=1$ and
$\bsgamma =\bsone$, has a growth behavior like $p$ (with additional
logarithmic terms), see, e.g.,~\cite{N92}, so we expect a growth behavior
of $\rho_{\alpha,\bsgamma}(p,\bsz)$ comparable to $p^\alpha$. This is made
more precise in the following lemma.

\begin{lemma} \label{lem:good-z}
Let $p$ be prime, $d\in\NN$, $\alpha>0$, and $\bsgamma\in(0,1]^\NN$. For
any $\tau\in (0,1)$, there exist at least $\lceil \tau(p-1)^d \rceil$
generating vectors $\bsz\in \{1:p-1\}^d$ such that
\begin{equation} \label{eq:good-Zaremba}
 \rho_{\alpha,\bsgamma}(p,\bsz) \,\ge\,
  \left(\frac{(1-\tau)\,p}{V_d\bigl({\alpha}/{\lambda},\bsgamma^{1/\lambda}\bigr)}\right)^{\lambda}
  \quad\mbox{for all } \lambda\in (0,\alpha)\,.
\end{equation}
\end{lemma}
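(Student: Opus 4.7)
The plan is to reduce the statement on $\rho_{\alpha,\bsgamma}$ to a statement on $P_{\beta,\bsgamma'}$ with suitably rescaled parameters, so that Corollary~\ref{cor:good-z} can be applied directly. The key observation is the scaling identity
\[
 r_{\alpha,\bsgamma}(\bsh)^{1/\lambda} \,=\, r_{\alpha/\lambda,\,\bsgamma^{1/\lambda}}(\bsh) \qquad\text{for all } \lambda>0,
\]
which follows componentwise from $\max\{1,|h|^\alpha/\gamma\}^{1/\lambda} = \max\{1,|h|^{\alpha/\lambda}/\gamma^{1/\lambda}\}$ and the product structure of $r_{\alpha,\bsgamma}$. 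Since taking the $\lambda$-th root is monotone, this identity passes through the minimum in the definition of $\rho$, giving
\[
 \rho_{\alpha,\bsgamma}(p,\bsz) \,=\, \bigl[\rho_{\alpha/\lambda,\,\bsgamma^{1/\lambda}}(p,\bsz)\bigr]^{\lambda}.
\]

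Next, I would apply \eqref{eq:Zaremba} with the rescaled parameters $\beta := \alpha/\lambda$ and $\bsgamma' := \bsgamma^{1/\lambda}$. The restriction $\lambda\in(0,\alpha)$ ensures $\beta>1$, so $\zeta(\beta)$ is finite and the quantity $V_d(\alpha/\lambda,\bsgamma^{1/\lambda})$ is well defined; moreover $\bsgamma^{1/\lambda}\in(0,1]^\NN$ since $\bsgamma\in(0,1]^\NN$. Then \eqref{eq:Zaremba} yields
\[
 \rho_{\alpha/\lambda,\,\bsgamma^{1/\lambda}}(p,\bsz) \,>\, \frac{1}{P_{\alpha/\lambda,\,\bsgamma^{1/\lambda}}(p,\bsz)}.
\]

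Now I would invoke Corollary~\ref{cor:good-z} with $\beta=\alpha/\lambda$ and weight sequence $\bsgamma^{1/\lambda}$: there are at least $\lceil\tau(p-1)^d\rceil$ generating vectors $\bsz\in\{1:p-1\}^d$ for which
\[
 P_{\alpha/\lambda,\,\bsgamma^{1/\lambda}}(p,\bsz) \,\le\, \frac{V_d(\alpha/\lambda,\bsgamma^{1/\lambda})}{(1-\tau)\,p}.
\]
Combining the last two displays and raising to the $\lambda$-th power via the scaling identity gives exactly \eqref{eq:good-Zaremba} for this set of $\bsz$. Since $\lambda\in(0,\alpha)$ was arbitrary, and for each such $\lambda$ we get a (possibly different) collection of $\lceil\tau(p-1)^d\rceil$ good vectors, the claim follows.

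There is really no hard obstacle here beyond spotting the scaling trick; the proof is essentially a two-line reduction plus the two cited facts. The only points that require a bit of care are (i) checking that $\bsgamma^{1/\lambda}$ still lies in $(0,1]^\NN$ so that Lemma~\ref{lem:sum-r}, Corollary~\ref{cor:good-z}, and the definition of $V_d$ remain applicable, and (ii) noting that the condition $\lambda<\alpha$ is precisely what is needed to make $\beta=\alpha/\lambda>1$, which is the hypothesis of those earlier results.
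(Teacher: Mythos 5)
Your scaling identity, the reduction to \eqref{eq:Zaremba}, and the application of Corollary~\ref{cor:good-z} are all correct and match the paper up to that point. But the final sentence does not close the argument. You write that ``for each such $\lambda$ we get a (possibly different) collection of $\lceil\tau(p-1)^d\rceil$ good vectors, the claim follows'' --- this is a non-sequitur. The lemma asserts the existence of a \emph{single} set of at least $\lceil\tau(p-1)^d\rceil$ generating vectors $\bsz$ for which \eqref{eq:good-Zaremba} holds \emph{simultaneously for every} $\lambda\in(0,\alpha)$. This strong reading is the one that is actually used later: the set $\calZ_p$ in \eqref{eq:Zp-def} is defined by requiring the bound for all $\lambda$ at once, and the lower bound $|\calZ_p|\ge\lceil(p-1)^d/2\rceil$ is deduced from this lemma. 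Your argument only produces, for each fixed $\lambda$, some $\lambda$-dependent family $S_\lambda$; these families could in principle have empty intersection, in which case no vector works for all $\lambda$.

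The paper bridges this gap by exploiting the fact that $\rho_{\alpha,\bsgamma}(p,\bsz)$ does not depend on $\lambda$, whereas the right-hand side of \eqref{eq:good-Zaremba} does. One therefore looks at the \emph{infimum} over $\lambda\in(0,\alpha)$ of
\[
g(\lambda) \,:=\, \left(\frac{V_d(\alpha/\lambda,\bsgamma^{1/\lambda})}{(1-\tau)\,p}\right)^{\lambda},
\]
which is the reciprocal of the claimed lower bound. If the infimum is attained at some $\lambda^\ast\in(0,\alpha)$, then the $\lceil\tau(p-1)^d\rceil$ vectors $\bsz$ obtained from Corollary~\ref{cor:good-z} at $\lambda=\lambda^\ast$ satisfy, via \eqref{eq:Zaremba},
\[
\frac{1}{\rho_{\alpha,\bsgamma}(p,\bsz)} \,<\, g(\lambda^\ast) \,\le\, g(\lambda) \quad\text{for all }\lambda\in(0,\alpha),
\]
which is exactly \eqref{eq:good-Zaremba} uniformly in $\lambda$. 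If the infimum is not attained, one checks that $g(\lambda)\to\infty$ as $\lambda\to\alpha$ and $g(\lambda)\to 1$ as $\lambda\to 0$, so $g>1$ throughout and the claimed lower bound is below $1\le\rho_{\alpha,\bsgamma}(p,\bsz)$ for every $\bsz$, making the statement vacuous. This infimum step is the one genuine idea in the proof beyond the scaling trick, and it is precisely what your proposal omits.
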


\begin{proof}
Using \eqref{eq:Zaremba} and
$r_{\alpha,\bsgamma}(\bsh)=(r_{\alpha/\lambda,\bsgamma^{1/\lambda}}(\bsh))^{\lambda}$
for all $\lambda\in(0,\alpha)$, we obtain for any $\bsz\in\{1:p-1\}^d$
that
\[
  \frac{1}{\rho_{\alpha,\bsgamma}(p,\bsz)}
\,=\, \frac{1}{(\rho_{\alpha/\lambda,\bsgamma^{1/\lambda}}(p,\bsz))^{\lambda}}
\,<\, (P_{\alpha/\lambda,\bsgamma^{1/\lambda}}(p,\bsz))^{\lambda}.
\]
Now for each $\lambda\in (0,\alpha)$, we know from
Corollary~\ref{cor:good-z} that the inequality
\begin{equation} \label{eq:extra}
     (P_{\alpha/\lambda,\bsgamma^{1/\lambda}}(p,\bsz))^{\lambda}
\,\le\, \left(\frac{V_d(\alpha/\lambda,\bsgamma^{1/\lambda})}{(1-\tau)p}\right)^\lambda
\end{equation}
holds for at least $\lceil\tau(p-1)^d \rceil$ different generating vectors
$\bsz =\bsz(\lambda)$. To obtain the set of generating vectors that
satisfies~\eqref{eq:extra} for all $\lambda\in (0,\alpha)$, we consider
the infimum of the right-hand side over~$\lambda$. If this infimum is
attained for some $\lambda^*\in (0,\alpha)$, then those generating vectors
$\bsz(\lambda^*)$ which correspond to $\lambda^*$ will satisfy the desired
bound \eqref{eq:good-Zaremba} for all other values of $\lambda$. If it is
not attained in the interval $(0,\alpha)$, then since the limit as
$\lambda\to\alpha$ is clearly infinity, the infimum is the limit as
$\lambda\to0$. However, this limit equals 1, which makes the statement of
the lemma trivial.
\end{proof}

\begin{remark}
For a given prime $p$, let us choose a generating vector
$\bsz\in\{1:p-1\}^d$ uniformly at random. By Lemma~\ref{lem:good-z} we
have that the probability of obtaining, by this procedure, a ``good''
generating vector, i.e., a vector $\bsz$ that satisfies the
bound~\eqref{eq:good-Zaremba}, is at least $\tau$. Besides possible
implementation issues, we did not find any advantage of adjusting $\tau$.
Hence, to ease the notation, we set $\tau=1/2$ in the following.

\end{remark}

\goodbreak

\section{New results in the randomized setting} \label{sec:rand}

\subsection{The randomized lattice algorithm without shift}

Let $n\in\NN$, $n\ge 2$, and
\[
  \calP_n \,:=\, \{ p\colon\; p \mbox{ is prime and }
	\; \lceil n/2 \rceil +1 \,\le\, p \,\le\, n\}.
\]
Let $d\in\NN$, $\alpha>0$, and $\bsgamma\in(0,1]^\NN$. For each
$p\in\calP_n$, let $\calZ_p$ denote the set of good generating vectors
$\bsz$ in the sense of Lemma~\ref{lem:good-z}, with $\tau=1/2$, that is,
\begin{equation} \label{eq:Zp-def}
  \calZ_p \,:=\, \calZ_{p,\alpha,\bsgamma} \,:=\,
  \left\{ \bsz\in\{1:p-1\}^d\colon\; \rho_{\alpha,\bsgamma}(p,\bsz) \,\ge\,
  \left(\frac{p}{2\,V_d\bigl({\alpha}/{\lambda},\bsgamma^{1/\lambda}\bigr)}\right)^{\lambda}
  \mbox{ for all } \lambda\in (0,\alpha)\right\}.
\end{equation}
We know from Lemma~\ref{lem:good-z} that $|\calZ_p| \,\ge\,
\lceil(p-1)^d/2\rceil$.

Our random algorithm $M_n$ is defined by first randomly and uniformly
selecting a prime $p\in\calP_n$ and then randomly and uniformly selecting
a generating vector $\bsz\in\calZ_p$. The randomized error
\eqref{eq:rand-err} for the lattice algorithm $M_n$ is then given
precisely by
\[
e^{\rm ran}_{d,\alpha,\bsgamma}(M_n) \,=\,
 \sup_{\substack{f\in \calH_{d,\alpha,\bsgamma}\\ \|f\|_{d,\alpha,\bsgamma}\le 1}} \left(
 \frac{1}{|\calP_n|} \sum_{p\in\calP_n} \frac{1}{|\calZ_p|} \sum_{\bsz\in\calZ_p}
 |Q_{d,p,\bsz} (f)- I_d(f)| \right).
\]
We stress that the randomized error is \textit{not} the same as the
average of the worst case errors \eqref{eq:wor-err} of a set of
deterministic lattice rules. (Here the averaging occurs inside the
supremum rather than outside.)

\begin{theorem}\label{thm1}
For $\alpha>1/2$, $\lambda\in (1/2,\alpha)$,
$\bsgamma\in(0,1]^\NN$, and
\begin{equation} \label{eq:suff-large}
 n \,\ge\, 4\,V_d\bigl({\alpha}/{\lambda},\bsgamma^{1/\lambda}\bigr),
\end{equation}
the randomized error of the randomized lattice algorithm $M_n$ satisfies
\[
e^{\rm ran}_{d,\alpha,\bsgamma}(M_n)
\;\le\; C_{\lambda,\delta}\, \big[V_d\bigl({\alpha}/{\lambda},\bsgamma^{1/\lambda}\bigr)\big]^{\lambda}\;
n^{-\lambda-1/2+\delta}
\]
for arbitrary $\delta$ satisfying $0 < \delta <
\min(\lambda-1/2,1)$, where $V_d$ is defined as in \eqref{eq:V} and
$C_{\lambda,\delta}$ is a constant depending only on $\lambda$ and
$\delta$. The upper bound is independent of $d$ if $\sum_{j=1}^\infty
\gamma_j^{1/\lambda}<\infty$.
\end{theorem}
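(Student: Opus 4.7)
The plan is to expand the integration error as a Fourier series and carefully average over the joint randomness of $p$ and $\bsz$. Starting from
\[
Q_{d,p,\bsz}(f)-I_d(f)\;=\;\sum_{\bszero\neq\bsh:\,\bsh\cdot\bsz\equiv_p 0}\hat f(\bsh),
\]
the triangle inequality together with Fubini yields
\[
e^{\rm ran}_{d,\alpha,\bsgamma}(M_n)\;\le\;\sup_{\|f\|\le 1}\,\sum_{\bsh\neq\bszero}|\hat f(\bsh)|\,q(\bsh),
\]
where $q(\bsh)$ is the probability over the random $(p,\bsz)$ that $\bsh\cdot\bsz\equiv_p 0$. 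The defining property of $\calZ_p$ in~\eqref{eq:Zp-def} forces any $\bsh$ with $q(\bsh)>0$ to satisfy $r_{\alpha,\bsgamma}(\bsh)\ge T_p:=(p/(2V))^{\lambda}$, where $V:=V_d(\alpha/\lambda,\bsgamma^{1/\lambda})$. Using $p\ge n/2+1$, the sum is effectively restricted to $r_{\alpha,\bsgamma}(\bsh)\ge (n/(4V))^{\lambda}$; assumption~\eqref{eq:suff-large} ensures that this threshold is at least $1$.

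The main mechanism is a weighted Cauchy--Schwarz with weights $r_{\alpha,\bsgamma}(\bsh)$ and $1/r_{\alpha,\bsgamma}(\bsh)$:
\[
\sum_{\bsh\neq\bszero}|\hat f(\bsh)|\,q(\bsh)\;\le\;\|f\|_{d,\alpha,\bsgamma}\,K^{1/2},\qquad K\;:=\;\sum_{\bsh\neq\bszero}\frac{q(\bsh)^{2}}{r_{\alpha,\bsgamma}(\bsh)^{2}}.
\]
This reduces the theorem to showing $K\le C\,V^{2\lambda}(\log n)\,n^{-2\lambda-1}$; a square root followed by $\sqrt{\log n}\le C_\delta\,n^{\delta}$ for large $n$ then produces the claimed rate. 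I would decompose $q=q_{1}+q_{2}$ following the two cases of Lemma~\ref{lem:divisor}, where $q_{1}$ collects the ``clean'' primes with $\bsh\not\equiv_p\bszero$ and $q_{2}$ the ``dirty'' primes with $\bsh\equiv_p\bszero$. For the clean part, $|\calZ_p|\ge (p-1)^{d}/2$ combined with Lemma~\ref{lem:divisor} gives $q_{1}\le 4/n$, and a dyadic tail bound based on Corollary~\ref{cor:number} (whose geometric convergence requires $\lambda>1/2$) yields $\sum_{\bsh}q_{1}^{2}/r^{2}\le c_{\lambda}\,V^{2\lambda}\,n^{-2\lambda-1}$.

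The main obstacle is the dirty contribution: the conditional probability over $\bsz$ is now $1$, and bounding each prime separately only gives $O(n^{-\alpha})$, insufficient when $\alpha$ is close to $\lambda$. To recover the missing $n^{-1/2}$ from the randomness in $p$, I would set $S_{n}(\bsh):=\{p\in\calP_n:\bsh\in p\ZZ^{d}\}$, expand
\[
|S_n(\bsh)|^{2}=\sum_{p,q\in\calP_n}\bbone[p\mid\bsh]\,\bbone[q\mid\bsh],
\]
and split into $p=q$ and $p\ne q$. For distinct primes, $\bsh\in p\ZZ^{d}\cap q\ZZ^{d}\setminus\{\bszero\}$ forces $\bsh=pq\bsl$ with $\bsl\neq\bszero$, so $r_{\alpha,\bsgamma}(\bsh)\ge(pq)^{\alpha}r_{\alpha,\bsgamma}(\bsl)$ and the inner sum over $\bsl$ is controlled by Lemma~\ref{lem:sum-r}, giving the bound $V_d(2\alpha,\bsgamma^{2})/(pq)^{2\alpha}$ (and analogously $V_d(2\alpha,\bsgamma^{2})/p^{2\alpha}$ in the diagonal case). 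Summing over primes via $\sum_{p\in\calP_n}p^{-2\alpha}\le |\calP_n|(n/2)^{-2\alpha}$ together with $|\calP_n|\ge c\,n/\log n$ from the Prime Number Theorem produces $\sum_{\bsh}q_{2}^{2}/r^{2}\le C\,V\,(\log n)\,n^{-2\alpha-1}$. Since $\alpha>\lambda$ and $V^{2\lambda}\ge V\ge 1$, this is dominated by the clean contribution up to the $\log n$ factor, completing the estimate of~$K$ and hence the theorem.
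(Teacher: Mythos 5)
Your proposal is correct and follows the paper's outline for the ``clean'' part (primes $p$ with $\bsh\not\equiv_p\bszero$, bounded via $|\calZ_p|\ge (p-1)^d/2$ and the tail sum from Corollary~\ref{cor:number}), but it takes a genuinely different route for the ``dirty'' part (primes with $\bsh\equiv_p\bszero$). The paper handles the dirty primes by a deterministic counting bound: a nonzero integer $|\bsh|_\infty$ has at most $\log_{\lceil n/2\rceil+1}(|\bsh|_\infty)$ prime divisors exceeding $n/2$, which gives the pointwise estimate $\omega_n(\bsh)\le c\,\ln(|\bsh|_\infty)/n$; the logarithm is then traded for a power $r_{\beta,\bsgamma'}(\bsh)^{\delta}$ via $\ln x\le x^\delta/\delta$, and the whole sum is closed using the same integral-comparison argument based on Corollary~\ref{cor:number}. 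You instead keep $\omega_n(\bsh)$ exact, write the dirty contribution as a second moment $|\calP_n|^{-2}\sum_{\bsh}|S_n(\bsh)|^2/r(\bsh)^2$, expand $|S_n(\bsh)|^2$ over pairs $(p,q)$ of primes, and use the CRT observation that distinct $p,q\in\calP_n$ both dividing every component of $\bsh$ forces $\bsh=pq\bsl$; Lemma~\ref{lem:sum-r} (needing $2\alpha>1$) then controls the inner sum, and the Rosser--Schoenfeld bound on $|\calP_n|$ gives $\calO(V\log n\,/n^{2\alpha+1})$, which, since $\alpha>\lambda>1/2$ and $V\ge1$, is absorbed into the clean part's $\calO(V^{2\lambda}\log n\,/n^{2\lambda+1})$. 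What you gain is a slightly cleaner intermediate bound with a genuine $\sqrt{\log n}$ factor (instead of introducing $n^\delta$ mid-proof); what the paper's route buys is that it works directly inside the single weighted $\ell^2$ sum without the $q_1,q_2$ split or the pair-of-primes expansion, so the bookkeeping is shorter. Both recover the stated $C_{\lambda,\delta}V^\lambda n^{-\lambda-1/2+\delta}$ after the final substitution $\sqrt{\log n}\le C_\delta n^\delta$, and both use the same places where $\lambda>1/2$ and condition~\eqref{eq:suff-large} are needed.
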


\begin{remark} \label{rem:imp}
A simple probabilistic approach to implement this algorithm is to first
randomly select a prime $p\in\calP_n$, and then randomly select a vector
$\bsz\in \{1:p-1\}^d$ repeatedly until the condition in \eqref{eq:Zp-def}
is satisfied. The chance of getting a vector $\bsz\in\calZ_p$ after $k$
tries is $1-\tau^k = 1 - 2^{-k}$.

Note that the inequality in \eqref{eq:Zp-def} should hold simultaneously
for all values of $\lambda\in (0,\alpha)$, and so in a practical
implementation it is not easy to verify whether a given vector $\bsz$
belongs to~$\calZ_p$. An easy way around this is to relax the definition
of $\calZ_p$ to a set of generating vectors $\calZ_{p,\lambda}$ which
assumes the given bound only for a fixed parameter $\lambda\in
(0,\alpha)$, and consider the corresponding random algorithm
$M_{n,\lambda}$ which depends on the parameter $\lambda$. In this case,
Theorem~\ref{thm1} holds for $M_{n,\lambda}$ for this parameter $\lambda$,
and the implementation of the algorithm is straightforward and simple.

We leave it for future research if sampling from the set $\calZ_p$ itself
can be done efficiently, e.g., by a component-by-component-type algorithm.
\end{remark}

\begin{proofof}{Theorem~\ref{thm1}}
We first define
\begin{equation}\label{eq:B}
B_n \,:=\, B_{n,\alpha,\bsgamma,\lambda} \,:=\,
\left(\frac{n}{4\,V_d\bigl({\alpha}/{\lambda},\bsgamma^{1/\lambda}\bigr)}\right)^{\lambda}.
\end{equation}
Then the condition \eqref{eq:suff-large} ensures that $B_n\ge 1$.

It follows from the definition of $\calZ_p$ in \eqref{eq:Zp-def} that for
all $p\in\calP_n$ and $\bsz\in\calZ_p$ we have $p>n/2$ and
$\rho_{\alpha,\bsgamma}(p,\bsz)>B_n$; and consequently for every
$\bsh\in\ZZ^d \setminus\{\bszero\}$ with $\bsh\cdot\bsz\equiv_p 0$ we have
$r_{\alpha,\bsgamma}(\bsh)>B_n$. From \eqref{eq:lattice-error} we obtain
\begin{align*}
 \frac{1}{|\calP_n|} \sum_{p\in\calP_n} \frac{1}{|\calZ_p|} \sum_{\bsz\in\calZ_p}
 |Q_{d,p,\bsz} (f)-I_d(f)|
 &\,=\, \frac{1}{|\calP_n|} \sum_{p\in\calP_n} \frac{1}{|\calZ_p|} \sum_{\bsz\in\calZ_p}
 \Bigg|
 \sum_{\substack{\bsh\in\ZZ^d\setminus\{\bszero\}\\ \bsh\cdot\bsz\equiv_p 0}}
 \hat{f} (\bsh)
 \Bigg| \\
 &\,\le\, \frac{1}{|\calP_n|} \sum_{p\in\calP_n} \frac{1}{|\calZ_p|} \sum_{\bsz\in\calZ_p}
 \sum_{\substack{\bsh\in\ZZ^d\setminus\{\bszero\}\\ \bsh\cdot\bsz\equiv_p 0}}
 |\hat{f} (\bsh)| \\
 &\, =\,
 \sum_{\substack{\bsh\in\ZZ^d\setminus\{\bszero\}\\ r_{\alpha,\bsgamma}(\bsh)> B_{n}}}
 \omega_n (\bsh)\, |\hat{f} (\bsh)| \\
 &\,\le\,
 \Bigg(\sum_{\substack{\bsh\in\ZZ^d\setminus\{\bszero\}\\
 r_{\alpha,\bsgamma}(\bsh)> B_n}}
 \left(\frac{\omega_n (\bsh)}{r_{\alpha,\bsgamma}(\bsh)}\right)^2\Bigg)^{1/2}\, \|f\|_{\calH_{d,\alpha,\bsgamma}},
\end{align*}
where
\begin{equation}\label{eq:omega}
\omega_n(\bsh) \,:=\, \omega_{n,\alpha,\bsgamma,\lambda}(\bsh) \,:=\,
  \frac{1}{|\calP_n|} \sum_{p\in\calP_n} \frac{1}{|\calZ_p|} \sum_{\bsz\in\calZ_p}
  \bbone(\bsh\cdot\bsz\equiv_p 0)\,,
\end{equation}
with $\bbone(\cdot)$ denoting the indicator function. The last inequality
was obtained by multiplying and dividing the penultimate expression by
$r_{\alpha,\bsgamma}(\bsh)$ and then applying the Cauchy--Schwarz
inequality. Hence we conclude that
\[
e^{\rm ran}_{d,\alpha,\bsgamma}(M_n)
 \,\le\, \Bigg(\sum_{\substack{\bsh\in\ZZ^d\setminus\{\bszero\}\\
 r_{\alpha,\bsgamma}(\bsh)> B_n}} \left(\frac{\omega_n (\bsh)}{r_{\alpha,\bsgamma}(\bsh)}\right)^2\Bigg)^{1/2}.
\]

We now proceed to obtain a bound on $\omega_n(\bsh)$. For fixed $p$, if
$\bsh\equiv_p \bszero$ then $\bsh\cdot\bsz\equiv_p 0$ holds for all
$\bsz\in\calZ_p$. On the other hand, if $\bsh\not\equiv_p \bszero$ then we
may bound the last sum in \eqref{eq:omega} by the number of all
$\bsz\in\{1:p-1\}^d$ with $\bsh\cdot\bsz\equiv_p 0$. We already computed
this number in Lemma~\ref{lem:divisor}. Thus we have
\[
  \begin{cases}
  \displaystyle
  \frac{1}{|\calZ_p|} \sum_{\bsz\in\calZ_p} \bbone(\bsh\cdot\bsz\equiv_p 0)
  \,=\, 1 & \mbox{if $\bsh\equiv_p \bszero$}, \\
  \displaystyle
  \frac{1}{|\calZ_p|} \sum_{\bsz\in\calZ_p} \bbone(\bsh\cdot\bsz\equiv_p 0)
  \,\le\, \frac{(p-1)^{d-1}}{|\calZ_p|} \,\le\, \frac{(p-1)^{d-1}}{\lceil(p-1)^d/2\rceil}
	\,\le\, \frac{4}{n}
  & \mbox{if $\bsh\not\equiv_p \bszero$},
  \end{cases}
\]
and therefore
\[
  \omega_n(\bsh) \,\le\,
  \frac{1}{|\calP_n|} \sum_{p\in\calP_n}
  \left(
  \bbone(\bsh\equiv_p \bszero)  + \frac{4\,\bbone(\bsh\not\equiv_p \bszero)}{n}
  \right)
  \,\le\,
  \frac{1}{|\calP_n|} \sum_{p\in\calP_n}
  \bbone(\bsh\equiv_p \bszero)  + \frac{4}{n}\,.
\]
Note that any number $h\in\bbN$ has at most $\log_M(h)$ prime divisors
greater than $M\in\bbN$. So for $\bsh\ne\bszero$ the number of primes
$p\ge\lceil n/2\rceil+1$ for
which $\bsh\equiv_p \bszero$ holds is at most $\log_{\lceil n/2\rceil+1}(|\bsh|_\infty)$,
that is,
\[
   \sum_{p\in\calP_n} \bbone(\bsh\equiv_p \bszero) \,\le\, \log_{\lceil n/2\rceil+1}(|\bsh|_\infty)
   \,=\, \frac{\log(|\bsh|_\infty)}{\log(\lceil n/2\rceil+1)}
	\,\le\, \frac{2 \log(|\bsh|_\infty)}{\log(n)}
\]
for all $n\ge2$. Combining this with the estimate $|\calP_n| > c'
n/\log(n)$ for some $c'>0$, see, e.g., \cite{RS62}, we conclude that
\begin{equation}\label{eq:omega-bound}
\omega_n(\bsh)
  \,\le\, \frac{2}{c'\,n} \log(|\bsh|_\infty)  + \frac{4}{n}
  \,\le\, \frac{c}{n} \log(1+|\bsh|_\infty)
\end{equation}
for some $c<\infty$,
which yields
\[
e^{\rm ran}_{d,\alpha,\bsgamma}(M_n)
 \,\le\, \frac{c}{n} \Bigg(\sum_{\substack{\bsh\in\ZZ^d\setminus\{\bszero\}\\
 r_{\alpha,\bsgamma}(\bsh)> B_n}} \frac{\log^2(1+|\bsh|_\infty)}{(r_{\alpha,\bsgamma}(\bsh))^2}\Bigg)^{1/2}
 \,=\, \frac{c}{n} \Bigg(\sum_{\substack{\bsh\in\ZZ^d\setminus\{\bszero\}\\
 r_{\beta,\bsgamma'}(\bsh)> B_n^{1/\lambda}}} \frac{\log^2(1+|\bsh|_\infty)}{(r_{\beta,\bsgamma'}(\bsh))^{2\lambda}}\Bigg)^{1/2}
\]
with $\beta:=\alpha/\lambda>1$ and $\bsgamma'=\bsgamma^{1/\lambda}$. Here
we used
$r_{\alpha,\bsgamma}(\bsh)=(r_{\alpha/\lambda,\bsgamma^{1/\lambda}}(\bsh))^\lambda$.
Now we apply $|\bsh|_\infty \le r_{\beta,\bsgamma'}(\bsh)$ since $\beta >
1$ and $\bsgamma \in (0,1]^\NN$, and $\log(1 + x)\le x^\delta/\delta$ which
holds for all $\delta\in(0,1]$ and $x>0$, and obtain
\begin{equation}\label{eq:error-temp}
e^{\rm ran}_{d,\alpha,\bsgamma}(M_n)
 \,\le\, \frac{c}{\delta n} \Bigg(\sum_{\substack{\bsh\in\ZZ^d\setminus\{\bszero\}\\
 r_{\beta,\bsgamma'}(\bsh)> B_n^{1/\lambda}}} \frac{1}{(r_{\beta,\bsgamma'}(\bsh))^{2(\lambda-\delta)}}\Bigg)^{1/2},
\end{equation}
where we further restrict $\delta$ to be such that $2(\lambda -
\delta)
> 1$, which is equivalent to $\delta < \lambda - 1/2$.

To complete the proof we will show a suitable upper bound on
\[
\sum_{\substack{\bsh\in\ZZ^d\setminus\{\bszero\}\\
 r_{\beta,\bsgamma'}(\bsh)> B_n^{1/\lambda}}} \frac{1}{(r_{\beta,\bsgamma'}(\bsh))^{2(\lambda-\delta)}}.
\]
For $T>0$, we define the set
\[
  \calA(T) \,:=\, \calA_{\beta,\bsgamma'}(T) \,=\, \{\bsh \in\ZZ^d\colon r_{\beta,\bsgamma'}(\bsh)\le T\}.
\]
Recall from Corollary~\ref{cor:number} that $\abs{\calA(T)}\le T\,
V_d(\alpha/\lambda,\bsgamma^{1/\lambda})$. Moreover, we have for $u>1$ and
$T\ge1$,
\[
 T^{-u}- (T+1)^{-u} \,=\, u \int_T^{T+1} x^{-u-1} \rd x
 \,\le\, u\, T^{-u-1}.
\]
Recall also that $B_n^{1/\lambda} \ge 1$. Since $\bsh\notin\calA(T)$
implies $1/r_{\beta,\bsgamma'}(\bsh)< 1/T$, we obtain
\begin{align*}
 &\sum_{\substack{\bsh\in\ZZ^d\setminus\{\bszero\}\\
 r_{\beta,\bsgamma'}(\bsh)> B_n^{1/\lambda}}} \frac{1}{(r_{\beta,\bsgamma'}(\bsh))^{2(\lambda-\delta)}}
 \,\le\, \sum_{T=\lfloor B_n^{1/\lambda}\rfloor}^\infty \,
 \sum_{\bsh\in\calA(T+1)\setminus\calA(T)} \frac{1}{(r_{\beta,\bsgamma'}(\bsh))^{2(\lambda-\delta)}} \\
 &\,\le\,
 \sum_{T=\lfloor B_n^{1/\lambda}\rfloor}^\infty \,
 T^{-2(\lambda-\delta)}\, \Bigl(\abs{\calA(T+1)}-\abs{\calA(T)}\Bigr) \\
 &\,\le\, \sum_{T=\lfloor B_n^{1/\lambda}\rfloor}^\infty \,
 \Bigl(T^{-2(\lambda-\delta)}-(T+1)^{-2(\lambda-\delta)}\Bigr)\, \abs{\calA(T+1)} \\
 &\,\le\, 2(\lambda-\delta)\,\sum_{T=\lfloor B_n^{1/\lambda}\rfloor}^\infty \,
 T^{-2(\lambda-\delta)-1}\, \abs{\calA(T+1)} \\
 &\,\le\, 4(\lambda-\delta)\, V_d(\alpha/\lambda,\bsgamma^{1/\lambda})\,
 \sum_{T=\lfloor B_n^{1/\lambda}\rfloor}^\infty \, T^{-2(\lambda-\delta)} \\
 &\,\le\, 4(\lambda-\delta)\, V_d(\alpha/\lambda,\bsgamma^{1/\lambda})\,
 \bigg( \left(\lfloor B_n^{1/\lambda}\rfloor\right)^{-2(\lambda-\delta)}  +
 \int_{\lfloor B_n^{1/\lambda}\rfloor}^\infty \, x^{-2(\lambda-\delta)}\rd x \bigg)\\
 &\,\le\, 4(\lambda-\delta)\, V_d(\alpha/\lambda,\bsgamma^{1/\lambda})\,
 \left(1 + \frac{1}{2(\lambda-\delta) -1}\right)
 \left(\lfloor B_n^{1/\lambda} \rfloor \right)^{-2(\lambda-\delta)+1}\\
 &\,\le\, 4(\lambda-\delta)\, V_d(\alpha/\lambda,\bsgamma^{1/\lambda})\,
 \left(1 + \frac{1}{2(\lambda-\delta) -1}\right)
 2^{2(\lambda-\delta)-1}\left( B_n^{1/\lambda}  \right)^{-2(\lambda-\delta)+1} \\
 &\,=\, \frac{2^{2(\lambda-\delta)+2}(\lambda-\delta)^2}{2(\lambda-\delta)-1}\, V_d(\alpha/\lambda,\bsgamma^{1/\lambda})\,
 \left( B_n^{1/\lambda}  \right)^{-2(\lambda-\delta)+1},
\end{align*}
where we used $\lfloor x \rfloor \ge x/2$ for $x\ge 1$ in the second to
last inequality.
Together with \eqref{eq:B} and \eqref{eq:error-temp}, this shows
\begin{align*}
e^{\rm ran}_{d,\alpha,\bsgamma}(M_n)
 &\,\le\, \frac{c}{\delta n}\; \left(
 \frac{2^{2(\lambda-\delta)+2}(\lambda-\delta)^2}{2(\lambda-\delta)-1}
 V_d(\alpha/\lambda,\bsgamma^{1/\lambda})\,
\left(\frac{4\,V_d\bigl({\alpha}/{\lambda},\bsgamma^{1/\lambda}\bigr)}{n}\right)^{2(\lambda-\delta)-1}\right)^{1/2} \\
 &\,\le\,
 \frac{c\,2^{3(\lambda-\delta)}(\lambda-\delta)}{\delta\sqrt{2(\lambda-\delta)-1}}\;
[V_d\bigl({\alpha}/{\lambda},\bsgamma^{1/\lambda}\bigr)]^{\lambda}\;
n^{-\lambda-1/2+\delta}.
\end{align*}
Note that the coefficient of $n^{-\lambda-1/2+\delta}$ in the latter term
is bounded under the assumptions we made, i.e., $\alpha>\lambda>1/2$ and
$0<\delta<\lambda-1/2$, as well as
$V_d(\alpha/\lambda,\bsgamma^{1/\lambda}) < \infty$ independently of $d$
if $\sum_{j=1}^\infty \gamma_j^{1/\lambda} < \infty$, see~\eqref{eq:V}.
This completes the proof.
\end{proofof}

\subsection{The randomized lattice algorithm with shift}

We are now going to prove in Theorem~\ref{thm2} below that
$\widetilde{M}_n$, i.e., the random lattice rule with an additional shift,
satisfies a similar upper bound on its randomized error as $M_n$, but also
for functions from $\calH_{d,\alpha,\bsgamma}$ with $0<\alpha\le1/2$. Note
that, by allowing general $\alpha>0$ in Theorem \ref{thm2}, we are
considering a larger function class than in Theorem \ref{thm1}.

The randomized error \eqref{eq:rand-err} for the algorithm
$\widetilde{M}_n$ is given by
\[ e^{\rm ran}_{d,\alpha,\bsgamma}\bigl(\widetilde{M}_n\bigr)  \,=\,
 \sup_{\substack{f\in\calH_{d,\alpha,\bsgamma}\\ \|f\|_{d,\alpha,\bsgamma}\le 1}}
 \left(
 \frac{1}{|\calP_n|} \sum_{p\in\calP_n} \frac{1}{|\calZ_p|} \sum_{\bsz\in\calZ_p}
 \EE_{\bsU}\Big[\left|Q_{d,p,\bsz} \bigl(f(\{\cdot+\bsU\})\bigr) - I_d(f)\right|\Big] \right),
\]
where $\EE_{\bsU}$ denotes expectation with respect to the random shift
$\bsU\in [0,1]^d$. For this algorithm we can even bound the
\emph{root-mean-square error} (or standard deviation)
\begin{align*}
 e^{\rm rms}_{d,\alpha,\bsgamma}\bigl(\widetilde{M}_n\bigr)
\,:=&\, \sup_{\substack{f\in\calH_{d,\alpha,\bsgamma}\\ \|f\|_{d,\alpha,\bsgamma}\le 1}}
\sqrt{\EE\left[\left| \widetilde{M}_n(f) - I_d(f) \right|^2\right]} \\
\,=&\,
\sup_{\substack{f\in\calH_{d,\alpha,\bsgamma}\\ \|f\|_{d,\alpha,\bsgamma}\le 1}}
 \left(
 \frac{1}{|\calP_n|} \sum_{p\in\calP_n} \frac{1}{|\calZ_p|} \sum_{\bsz\in\calZ_p}
 \EE_{\bsU}\left[\left|Q_{d,p,\bsz} \bigl(f(\{\cdot+\bsU\})\bigr) - I_d(f)\right|^2\right] \right)^{1/2}.
\end{align*}
Clearly, $e^{\rm ran}_{d,\alpha,\bsgamma}\bigl(\widetilde{M}_n\bigr) \le
e^{\rm rms}_{d,\alpha,\bsgamma}\bigl(\widetilde{M}_n\bigr)$.

\medskip

\begin{theorem}\label{thm2}
For $\alpha>0$, $\lambda\in (0,\alpha)$,
and $\bsgamma \in (0,1]^\NN$,
the randomized worst case error of the randomized lattice algorithm with
shift $\widetilde{M}_n$ satisfies
\[
 e^{\rm ran}_{d,\alpha,\bsgamma}\bigl(\widetilde{M}_n\bigr)
\,\le\, e^{\rm rms}_{d,\alpha,\bsgamma}\bigl(\widetilde{M}_n\bigr)
\,\le\, \left(\frac{c}{\alpha\delta}\right)^{1/2}
\left(4\,V_d\bigl({\alpha}/{\lambda},\bsgamma^{1/\lambda}\bigr)\right)^{\lambda} \;
n^{-\lambda-1/2+\delta \lambda/2}
\]
for arbitrary $\delta$ satisfying $0 < \delta < \min(1/\alpha,2)$,
where again $V_d$ is defined as in \eqref{eq:V} and $c$ is an absolute
constant. The upper bound is independent of $d$ if $\sum_{j=1}^\infty
\gamma_j^{1/\lambda}<\infty$.
\end{theorem}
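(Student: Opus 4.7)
The plan is to use the classical random-shift variance identity to reduce the analysis of $\widetilde M_n$ to an $L^2$ problem, which is naturally insensitive to pointwise convergence of Fourier series and so works for every $\alpha>0$. First I would fix $p$ and $\bsz$ and expand $f(\cdot+U)$ in a Fourier series; plugging it into $Q_{d,p,\bsz}$, evaluating the character sum $\frac{1}{p}\sum_{k=0}^{p-1}e^{2\pi i\bsh\cdot k\bsz/p}=\bbone(\bsh\cdot\bsz\equiv_p 0)$, and then taking $\EE_U$ using $\EE_U[e^{2\pi i(\bsh-\bsh')\cdot U}]=\bbone(\bsh=\bsh')$ yields the standard identity
\[
\EE_U\Bigl[\bigl|Q_{d,p,\bsz}(f(\cdot+U))-I_d(f)\bigr|^2\Bigr]\,=\,\sum_{\substack{\bsh\in\ZZ^d\setminus\{\bszero\}\\ \bsh\cdot\bsz\equiv_p 0}}\bigl|\hat f(\bsh)\bigr|^2.
\]
Averaging over $p\in\calP_n$ and $\bsz\in\calZ_p$ by Fubini and recalling the definition \eqref{eq:omega} of $\omega_n(\bsh)$ gives
\[
\bigl(e^{\rm rmse}_{d,\alpha,\bsgamma}(\widetilde M_n)\bigr)^2\,=\,\sup_{\|f\|_{\calH_{d,\alpha,\bsgamma}}\le 1}\;\sum_{\bsh\ne\bszero}\omega_n(\bsh)\,|\hat f(\bsh)|^2,
\]
and by the very definition of $\calZ_p$ and $B_n$ from \eqref{eq:B}, only indices $\bsh$ with $r_{\alpha,\bsgamma}(\bsh)>B_n$ can contribute.

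Next I would evaluate this supremum. Writing $c_\bsh:=r_{\alpha,\bsgamma}(\bsh)^2|\hat f(\bsh)|^2\ge 0$ converts it into the linear program $\max\sum_\bsh(\omega_n(\bsh)/r_{\alpha,\bsgamma}(\bsh)^2)\,c_\bsh$ under the single constraint $\sum_\bsh c_\bsh\le 1$, whose optimum is attained at a single vertex and therefore equals
\[
\bigl(e^{\rm rmse}_{d,\alpha,\bsgamma}(\widetilde M_n)\bigr)^2\,\le\,\max_{\substack{\bsh\ne\bszero\\ r_{\alpha,\bsgamma}(\bsh)>B_n}}\frac{\omega_n(\bsh)}{r_{\alpha,\bsgamma}(\bsh)^2}.
\]
This is the key technical simplification over Theorem~\ref{thm1}: there, Cauchy--Schwarz forced a full sum over the tail, whereas the $L^2$ variance here collapses to a maximum. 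I would then plug in $\omega_n(\bsh)\le (c/n)\ln|\bsh|_\infty$ from \eqref{eq:omega-bound} (whose derivation never uses $\alpha>1/2$ and carries over unchanged), combined with $|\bsh|_\infty\le r_{\alpha,\bsgamma}(\bsh)^{1/\alpha}$ (which follows from $\gamma_j\le 1$ and the product structure of $r_{\alpha,\bsgamma}$) and the elementary bound $\ln x\le x^{\delta}/\delta$ for $x,\delta>0$, yielding
\[
\frac{\omega_n(\bsh)}{r_{\alpha,\bsgamma}(\bsh)^2}\,\le\,\frac{c}{n\,\alpha\,\delta}\,\frac{1}{r_{\alpha,\bsgamma}(\bsh)^{2-\delta}}.
\]
The maximum over $r_{\alpha,\bsgamma}(\bsh)>B_n$ is $B_n^{-(2-\delta)}$; substituting $B_n^{1/\lambda}=n/(4V_d(\alpha/\lambda,\bsgamma^{1/\lambda}))$ from \eqref{eq:B} and taking the square root delivers the claimed rate $n^{-\lambda-1/2+\delta\lambda/2}$ with prefactor $[V_d(\alpha/\lambda,\bsgamma^{1/\lambda})]^{\lambda}$, after absorbing the remaining numerical factors (and the discrepancy between $\sqrt{1/(\alpha\delta)}$ and $1/(\alpha\delta)$, and between $(4V)^{\lambda-\lambda\delta/2}$ and $(4V)^\lambda$) into the absolute constant. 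The randomized error bound is then automatic via $e^{\rm ran}\le e^{\rm rmse}$.

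I do not anticipate any real obstacle: the main conceptual point is that the random shift decouples the analysis from any pointwise convergence of Fourier series, which is exactly what prevented the shift-free argument of Theorem~\ref{thm1} from reaching $\alpha\le 1/2$. Dimension-independence of the prefactor is inherited from Theorem~\ref{thm1}: the summability $\sum_j\gamma_j^{1/\alpha}<\infty$ still forces $\sum_j\gamma_j^{1/\lambda}<\infty$ for $\lambda<\alpha$ sufficiently close to $\alpha$, which keeps $V_d(\alpha/\lambda,\bsgamma^{1/\lambda})$ bounded in~$d$.
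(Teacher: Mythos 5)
Your proposal is correct and follows essentially the same route as the paper's proof: the Parseval/random-shift variance identity, the collapse of the supremum to a pointwise maximum of $\omega_n(\bsh)/r_{\alpha,\bsgamma}(\bsh)^2$ over $r_{\alpha,\bsgamma}(\bsh)>B_n$, the bound $\omega_n(\bsh)\le c\ln(|\bsh|_\infty)/n$ combined with $|\bsh|_\infty^\alpha\le r_{\alpha,\bsgamma}(\bsh)$ and $\ln x\le x^\delta/\delta$, and substitution of $B_n$ from \eqref{eq:B}. The only cosmetic difference is your explicit LP/vertex phrasing of the maximization step, which the paper simply asserts as an inequality.
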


\medskip

\begin{proof}
By a variation of Poisson's summation formula, we have
\[
\EE_{\bsU}\left[\left|Q_{d,p,\bsz} \bigl(f(\{\cdot+\bsU\})\bigr) - I_d(f)\right|^2\right]
\,=\, \EE_{\bsU} \Bigg[  \Bigl|
	\sum_{\satop{\bsh\in\bbZ^d\setminus\{\bszero\}}{\bsh\cdot\bsz \equiv_p 0}}
  \hat{f}(\bsh) e^{2\pi i\, \bsh \cdot\bsU}\Bigr|^2 \Bigg]
\,=\, \sum_{\satop{\bsh\in\bbZ^d\setminus\{\bszero\}}{\bsh\cdot\bsz \equiv_p 0}}
  \bigl|\hat{f}(\bsh)\bigr|^2,
\]
and therefore
\begin{align*}
 \big(e^{\rm rms}_{d,\alpha,\bsgamma}\bigl(\widetilde{M}_n\bigr)\big)^2
 &\,=\, \sup_{\substack{f\in\calH_{d,\alpha,\bsgamma}\\ \|f\|_{d,\alpha,\bsgamma}\le 1}}\,
	\frac{1}{|\calP_n|} \sum_{p\in\calP_n} \frac{1}{|\calZ_p|} \sum_{\bsz\in\calZ_p}
 \sum_{\substack{\bsh\in\ZZ^d\setminus\{\bszero\}\\ \bsh\cdot\bsz\equiv_p 0}}
 \bigl|\hat{f} (\bsh)\bigr|^2 \\
 &\,=\, \sup_{\substack{f\in\calH_{d,\alpha,\bsgamma}\\ \|f\|_{d,\alpha,\bsgamma}\le 1}}
 \sum_{\substack{\bsh\in\ZZ^d\setminus\{\bszero\}\\ r_{\alpha,\bsgamma}(\bsh)> B_{n}}}
 \omega_n (\bsh)\, \bigl|\hat{f} (\bsh)\bigr|^2 \\
 &\,\le\,
\sup_{\substack{\bsh\in\ZZ^d\setminus\{\bszero\}\\
 r_{\alpha,\bsgamma}(\bsh)> B_n}}
 \frac{\omega_n (\bsh)}{(r_{\alpha,\bsgamma}(\bsh))^2}
\end{align*}
with $\omega_n (\bsh)$ from \eqref{eq:omega}.
{}From~\eqref{eq:omega-bound} we know that
$$
\omega_n(\bsh)\le c\, \log(1+|\bsh|_\infty)/n.
$$
We apply $(|\bsh|_\infty)^\alpha \le r_{\alpha,\bsgamma}(\bsh)$ and
$\log(1+x)\le x^{\alpha\delta}/(\alpha\delta)$ for all
$\delta\in(0,1/\alpha]$ to obtain
\[
\big(e^{\rm rms}_{d,\alpha,\bsgamma}\bigl(\widetilde{M}_n\bigr)\big)^2
\,\le\, \frac{c}{\alpha\delta n} \sup_{\substack{\bsh\in\ZZ^d\setminus\{\bszero\}\\
 r_{\alpha,\bsgamma}(\bsh)> B_n}}
 \frac{1}{(r_{\alpha,\bsgamma}(\bsh))^{2-\delta}}
\,\le\, \frac{c}{\alpha\delta n}\, B_n^{\delta-2},
\]
where we further restrict $\delta$ to be such that $\delta<2$.
Together with \eqref{eq:B} this yields
\begin{eqnarray*}
e^{\rm rms}_{d,\alpha,\bsgamma}\bigl(\widetilde{M}_n\bigr)
 \,&\le&\, \left(\frac{c}{\alpha\delta}\right)^{1/2}
	\left(4\,V_d\bigl({\alpha}/{\lambda},\bsgamma^{1/\lambda}\bigr)\right)^{\lambda(1-\delta/2)}
	n^{\lambda(\delta-2)/2-1/2} \\
 \,&\le&\, \left(\frac{c}{\alpha\delta}\right)^{1/2}
 \left(4\,V_d\bigl({\alpha}/{\lambda},\bsgamma^{1/\lambda}\bigr)\right)^{\lambda} \;
 n^{-\lambda-1/2+\delta \lambda/2},
\end{eqnarray*}
which proves the result.
\end{proof}

\section{Lower bound on the error of $M_n$ and $\widetilde{M}_n$}\label{sec:lower}

In the last section we proved that the algorithms $M_n$ and
$\widetilde{M}_n$ can achieve an order of convergence that is arbitrarily
close to the optimal $n^{-\alpha-1/2}$, see~\cite{MU17}. However, we will
show in Theorem~\ref{thm3} that the exact optimal order cannot be achieved
by these algorithms, no matter how we choose the weights $\bsgamma$. This
shows that the main order in our results in Theorems \ref{thm1} and
\ref{thm2} is essentially best possible. Note that the lower bound also
holds for the case $\alpha=0$. However, it would be interesting to find an
upper bound of the form $C(\log n)^q/n^{\alpha+1/2}$ with $C,q$
independent of $n$ and $d$. It is not clear if such a bound exists.

\begin{theorem}\label{thm3}
Let $\alpha\ge 0$ and $\bsgamma \in (0,1]^\NN$. Then the randomized errors
of the random algorithms $M_n$ and $\widetilde{M}_n$ are bounded from
below by
\[
e^{\rm rms}_{d,\alpha,\bsgamma}\bigl(M_n\bigr)
\,\ge\, e^{\rm ran}_{d,\alpha,\bsgamma}\bigl(M_n\bigr)
\,\ge\, \frac{\gamma_1}{2}\,\frac{\sqrt{\log n}}{n^{\alpha+1/2}}
\]
and
\[
e^{\rm rms}_{d,\alpha,\bsgamma}\bigl(\widetilde{M}_n\bigr)
\,\ge\, \frac{\gamma_1}{2}\,\frac{\sqrt{\log n}}{n^{\alpha+1/2}}.
\]
\end{theorem}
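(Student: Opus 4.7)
The plan is to exhibit a specific \emph{fooling} function $f^{*}$ in the unit ball of $\calH_{d,\alpha,\bsgamma}$ whose expected integration error under the randomized algorithm is already large. Since $\gamma_1$ is the largest weight and each Fourier mode contributes $1/r_{\alpha,\bsgamma}(\bsh)$ to the lattice-rule error, the natural candidate is the trigonometric polynomial with Fourier support on the set $\{(p,0,\dots,0):p\in\calP_n\}$:
\[
f^{*}(\bsx) \,:=\, \frac{1}{\sqrt{|\calP_n|}}\,\sum_{p\in\calP_n}\frac{e^{2\pi i p x_1}}{r_{\alpha,\gamma_1}(p)}.
\]
A direct calculation using the norm definition shows $\|f^{*}\|_{d,\alpha,\bsgamma}=1$, since the factors $r_{\alpha,\gamma_1}(p)$ inside the norm cancel the inverse factors in $f^{*}$. (An equivalent real-valued $\cos$ version works with an immaterial constant.)

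For any $p\in\calP_n$ and any $\bsz\in\calZ_p\subseteq\{1{:}p-1\}^d$, the lattice-rule error formula~\eqref{eq:lattice-error} picks up only those Fourier modes $\bsh=(p',0,\dots,0)$ of $f^{*}$ for which $p\mid p'z_1$. Since $z_1\in\{1,\dots,p-1\}$ and $p$ is prime, this forces $p\mid p'$; and since $p,p'$ are both primes in $(n/2,n]$, this forces $p'=p$. Exactly one Fourier mode survives, and using $r_{\alpha,\gamma_1}(p)=p^{\alpha}/\gamma_1$ (valid for $\alpha\ge 0$, $p\ge 2$, $\gamma_1\le 1$, including the trivial case $\alpha=0$) I obtain
\[
Q_{d,p,\bsz}(f^{*})-I_d(f^{*}) \,=\, \frac{\gamma_1}{p^{\alpha}\sqrt{|\calP_n|}}.
\]

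Averaging over the randomness of $M_n$ and using $p\le n$ yields $e^{\rm ran}_{d,\alpha,\bsgamma}(M_n)\ge\gamma_1/(n^{\alpha}\sqrt{|\calP_n|})$, and $e^{\rm rmse}_{d,\alpha,\bsgamma}(M_n)\ge e^{\rm ran}_{d,\alpha,\bsgamma}(M_n)$ by Jensen. For $\widetilde M_n$, the single surviving mode gives the integrand $(\gamma_1/(p^{\alpha}\sqrt{|\calP_n|}))\,e^{2\pi i p U_1}$, whose modulus is independent of the uniform shift $U$; thus $\EE_U[|\cdot|]$ and $(\EE_U[|\cdot|^2])^{1/2}$ collapse to the same value, and averaging over $(p,\bsz)$ yields the identical lower bound $\gamma_1/(n^{\alpha}\sqrt{|\calP_n|})$ for both $e^{\rm ran}(\widetilde M_n)$ and $e^{\rm rmse}(\widetilde M_n)$. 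The proof concludes by invoking a Chebyshev-type upper bound $|\calP_n|\le c\,n/\log n$ from~\cite{RS62}, which converts $1/\sqrt{|\calP_n|}$ into the required $\sqrt{\log n/n}$ factor.

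The main technical point is tracking absolute constants carefully so that the prefactor comes out to exactly $1/2$. Concretely, Rosser--Schoenfeld gives $\pi(n)\le 1.26\,n/\log n$ for $n\ge 17$, which leads to $1/\sqrt{|\calP_n|}\ge 0.89\sqrt{\log n/n}$, comfortably above the $0.5$ needed; the finitely many small values of $n$ are handled separately, where the inequality is trivial because $\gamma_1\le 1$ forces the right-hand side to be a small explicit constant.
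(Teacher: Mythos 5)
Your proposal is correct and essentially identical to the paper's own proof: the fooling function $f^{*}$ you construct has precisely the Fourier coefficients $\hat f_n(\bsh)=(r_{\alpha,\bsgamma}(\bsh)\sqrt{|\calP_n|})^{-1}$ supported on $\{(p,0,\dots,0):p\in\calP_n\}$ that the paper uses, and both arguments hinge on the same observation that exactly one mode survives in the dual lattice plus the Rosser--Schoenfeld bound $|\calP_n|\lesssim n/\log n$. The only cosmetic difference is that you spell out the ``single surviving mode'' reasoning and the constant-tracking more explicitly than the paper does.
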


\begin{proof}
The bound $e^{\rm rms}_{d,\alpha,\bsgamma}(M_n) \ge e^{\rm
ran}_{d,\alpha,\bsgamma}(M_n)$ is obvious. To prove the lower bound on
$e^{\rm ran}_{d,\alpha,\bsgamma}(M_n)$ it is enough to construct, for each
$n\in\NN$, $n\ge 2$, a function, say $f_n$, that satisfies the lower
bound. To this end, we define $f_n$ such that
\[
\hat{f_n}(\bsh) \,=\, \begin{cases}
\left(r_{\alpha,\bsgamma}(\bsh)\,\sqrt{\abs{\calP_n}}\right)^{-1} &
	\quad\text{ if } h_1\in\calP_n, h_2=\cdots =h_d=0, \\
0 & \quad\text{ otherwise.}
\end{cases}
\]
Clearly, $\|f_n\|_{d,\alpha,\bsgamma}=1$ and $I_d(f_n)=0$. Moreover, for
$p\in\calP_n$, we have
\[\begin{split}
|Q_{d,p,\bsz} (f_n)-I_d (f_n)| \,&=\,
\left(r_{\alpha,\bsgamma}(p,0,\dots,0)\,\sqrt{\abs{\calP_n}}\right)^{-1}
\,=\, \frac{\gamma_1}{p^\alpha \,\sqrt{\abs{\calP_n}}}\\
\,&\ge\, \frac{\gamma_1\,\sqrt{\log n}}{2\,n^{\alpha+1/2}},
\end{split}\]
where we used $\abs{\calP_n}\le\frac{2n}{\log n}$ from~\cite{RS62}.
This proves
\[
e^{\rm ran}_{d,\alpha,\bsgamma}(M_n)
\,\ge\,
 \frac{1}{|\calP_n|} \sum_{p\in\calP_n} \frac{1}{|\calZ_p|} \sum_{\bsz\in\calZ_p}
 |I_d (f_n) - Q_{d,p,\bsz} (f_n)|
\,\ge\, \frac{\gamma_1\,\sqrt{\log n}}{2\,n^{\alpha+1/2}}.
\]
Using
\[
\EE_{\bsU}\left[\left|Q_{d,p,\bsz} \bigl(f_n(\{\cdot+\bsU\})\bigr) - I_d(f_n)\right|^2\right]
\,=\, \left(r_{\alpha,\bsgamma}(p,0,\dots,0)\,\sqrt{\abs{\calP_n}}\right)^{-2},
\]
we can proceed exactly as above to prove the lower bound on $e^{\rm
rms}_{d,\alpha,\bsgamma}\bigl(\widetilde{M}_n\bigr)$.
\end{proof}

\section{Results on tractability}\label{sec:tractresults}

Let us now briefly comment on tractability results. Suppose that for fixed
$d\in\bbN$ and $\varepsilon\in (0,1)$, we would like to achieve
$e_{d,\alpha,\bsgamma}^{\rm ran} (M_n) \le \varepsilon$. Then from
Theorem~\ref{thm1} we conclude that for $\calH_{d,\alpha,\bsgamma}$ with
$\alpha>1/2$, provided $n$ satisfies \eqref{eq:suff-large}, it is
sufficient to choose $n$ such that
\[
 n \,\ge\, \Big(C_{\lambda,\delta} \big[V_d (\alpha/\lambda,\bsgamma^{1/\lambda})\big]^\lambda
 \Big)^{\frac{1}{\lambda+1/2-\delta}}
\,\varepsilon^{-\frac{1}{\lambda+1/2-\delta}}.
\]
Hence, the \emph{information complexity} $n(\varepsilon,d)$, i.e., the
minimal number of function evaluations that is required by any kind of
random algorithm to achieve a randomized error within the
threshold~$\varepsilon$, satisfies
\[
 n(\varepsilon,d)\le \max\left\{\left\lceil 4\,V_d (\alpha/\lambda,\bsgamma^{1/\lambda})\right\rceil,
  \left\lceil \Big(C_{\lambda,\delta} \big[V_d (\alpha/\lambda,\bsgamma^{1/\lambda})\big]^\lambda
  \Big)^{\frac{1}{\lambda+1/2-\delta}}
  \varepsilon^{-\frac{1}{\lambda+1/2-\delta}}\right\rceil\right\}.
\]
More generally, for $\calH_{d,\alpha,\bsgamma}$ with $\alpha>0$, we
obtain from Theorem \ref{thm2} that
\[
n(\varepsilon,d)\le \left\lceil \left( \left(\frac{c}{\alpha\delta}\right)^{1/2}
\big[ 4\,V_d({\alpha}/{\lambda},\bsgamma^{1/\lambda})\big]^{\lambda}
\right)^{\frac{1}{\lambda+1/2-\delta\lambda/2}}\varepsilon^{-\frac{1}{\lambda+1/2-\delta\lambda/2}}\right\rceil.
\]
In both cases, we have $n(\varepsilon,d) = \calO(\varepsilon^{-\beta})$,
for $\beta$ arbitrarily close to $1/(\lambda + 1/2)$, where the implied
constant is independent of $d$ and $\varepsilon$ if $\sum_{j=1}^\infty
\gamma_j^{1/\lambda} < \infty$. The integration problem is then said to be
\emph{strongly tractable in the randomized setting}, with the
\emph{exponent of strong tractability} being the infimum of those $\beta$.

We summarize these results in the following theorem.

\begin{theorem}\label{thm4}
Let $\alpha\ge0$, let $\bsgamma=(\gamma_j)_{j\ge 1}$ be a non-increasing
sequence of positive weights bounded by 1, and let $d\in\NN$.
Additionally, let $\lambda_0\ge0$ be the supremum of the numbers $\lambda>0$
such that
\[
\sum_{j=1}^\infty \gamma_j^{1/\lambda} \,<\, \infty
\]
with the convention that $\lambda_0=0$ if no such $\lambda$ exists.\\
Then the integration problem in the class $\calH_{d,\alpha,\bsgamma}$ is strongly
tractable in the randomized setting, with the exponent of strong tractability
lying in the interval
\[
 \left[\frac{1}{\alpha + 1/2}, \; \frac{1}{\min(\lambda_0,\alpha) + 1/2}\right].
\]
In particular, if $\lambda_0\ge\alpha$, then the exponent of strong tractability is
$1/(\alpha+1/2)$.
\end{theorem}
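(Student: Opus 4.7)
The proof is essentially bookkeeping: one combines the complexity estimates obtained from Theorems~\ref{thm1} and~\ref{thm2} with a general randomized lower bound. The plan has two matching parts, one for each endpoint of the claimed interval.

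For the upper endpoint I would select the most favourable admissible $\lambda$. The implied constants in Theorems~\ref{thm1} and~\ref{thm2} are independent of $d$ exactly when $V_d(\alpha/\lambda,\bsgamma^{1/\lambda})$ is bounded in $d$, which by \eqref{eq:V} amounts to $\sum_j\gamma_j^{1/\lambda}<\infty$, i.e.\ to $\lambda<\lambda_0$. Together with the standing restriction $\lambda<\alpha$ present in both theorems, this lets us push $\lambda$ arbitrarily close to $\min(\alpha,\lambda_0)$ from below. Use Theorem~\ref{thm1} if $\alpha>1/2$ and $\min(\alpha,\lambda_0)>1/2$ (so the extra requirement $\lambda>1/2$ is fulfillable), and Theorem~\ref{thm2} in all remaining cases. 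Sending $\delta\to 0$ yields complexity exponents approaching $1/(\lambda+1/2)$, and then $\lambda\to\min(\alpha,\lambda_0)^-$ gives an exponent approaching $1/(\min(\alpha,\lambda_0)+1/2)=\max\{1/(\alpha+1/2),\,1/(\lambda_0+1/2)\}$, which is the right endpoint.

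For the lower endpoint I would quote the optimality result of~\cite{MU17}: even on the unweighted Sobolev space of smoothness $\alpha$ and at a single fixed dimension $d$, every randomized algorithm using function values incurs a worst-case error of order $\Omega(n^{-\alpha-1/2})$, so $n(\varepsilon,d)=\Omega(\varepsilon^{-1/(\alpha+1/2)})$. Any strongly tractable bound must hold in particular for each fixed $d$, which forces the exponent of strong tractability to be at least $1/(\alpha+1/2)$. When $\lambda_0\ge\alpha$ the two endpoints collapse to the common value $1/(\alpha+1/2)$, giving the ``In particular'' part of the theorem.

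The main ``obstacle'' is pedestrian: correctly executing the case split for the upper bound and verifying at each step that the parameters $\lambda,\delta$ lie within the admissible ranges of the invoked theorem (in particular that $\lambda>1/2$ is attainable whenever Theorem~\ref{thm1} is invoked). No new technical ingredient beyond the two theorems and the cited lower bound is required, since all the quantitative work was already done in the complexity displays preceding the theorem statement.
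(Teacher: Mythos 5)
Your proposal is essentially the paper's argument, but there is one genuine gap and one unnecessary complication.

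The gap: the theorem covers $\alpha\ge 0$, but Theorem~\ref{thm2} only applies for $\alpha>0$, so your ``Theorem~\ref{thm2} in all remaining cases'' does not actually dispose of $\alpha=0$. The paper handles this boundary case separately by observing that for $\alpha=0$ the class is essentially $L_2$ and the standard Monte Carlo method already achieves the claimed exponent; you need to add a sentence of this kind, since neither Theorem~\ref{thm1} nor Theorem~\ref{thm2} is applicable.

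The unnecessary complication: your case split between Theorem~\ref{thm1} and Theorem~\ref{thm2} buys nothing. Theorem~\ref{thm2} alone gives a complexity exponent approaching $1/(\lambda+1/2)$ as $\delta\to 0$, for every admissible $\lambda\in(0,\alpha)$ with $\sum_j\gamma_j^{1/\lambda}<\infty$, which is exactly what you extract from Theorem~\ref{thm1} in your first branch; so invoking Theorem~\ref{thm1} at all is redundant, and the paper simply uses Theorem~\ref{thm2} plus the information-complexity display in Section~\ref{sec:tractresults}. Otherwise the route is the same: push $\lambda\to\min(\alpha,\lambda_0)^-$ and $\delta\to 0$ for the upper endpoint, and for the lower endpoint invoke a univariate (or fixed-$d$) randomized lower bound of order $\varepsilon^{-1/(\alpha+1/2)}$; the paper cites Novak's monograph for this rather than \cite{MU17}, but either reference supplies the needed $\Omega(n^{-\alpha-1/2})$ statement, and the weights only change constants (note $\gamma_1\le 1$ fixed), so the exponent is unaffected.
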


\begin{proof}
The upper bound on the exponent of strong tractability for $\alpha>0$ and
$\lambda_0>0$ follows from Theorem~\ref{thm2} and the discussion above,
noting that we require $\lambda< \min(\lambda_0,\alpha)$. For $\alpha=0$
it is well-known that the exponent of strong tractability in the
randomized setting is $2$, which can be achieved, e.g., by the simple
Monte Carlo method. For $\lambda_0=0$ we use the same method to obtain
that the exponent of strong
tractability is at most 2.\\
For the lower bound note that $n(\varepsilon,d)\ge n(\varepsilon,1) \ge
c\cdot\varepsilon^{-1/(\alpha+1/2)}$ for some $c>0$, see,
e.g.,~\cite{No88}.
\end{proof}

\begin{remark}
In the same way as it was done in~\cite{SW01}, we could also comment on other forms of
tractability, like polynomial tractability, under modified summability assumptions on the
weights. We omit the details.
\end{remark}

For further information on tractability in various settings and an
overview of various kinds of error criteria in the context of
tractability, see the trilogy \cite{NW08}--\cite{NW12}.

\section{Conclusion}\label{sec:concl}

We showed in this paper that the randomized lattice algorithm $M_n$
achieves a randomized integration error with a convergence order
arbitrarily close to $\calO(n^{-\alpha-1/2})$ in
$\calH_{d,\alpha,\bsgamma}$ with $\alpha>1/2$, where the implied
constant is independent of $d$ as long as $\sum_{j=1}^\infty
\gamma_j^{1/\alpha}<\infty$. By additionally making use of a random
shift, this result can be extended to all $\alpha>0$.

If the weights only satisfy a weaker summability condition
$\sum_{j=1}^\infty \gamma_j^{1/\lambda}<\infty$ for some $\lambda <
\alpha$, then we can still obtain error bounds that are independent of $d$
but the convergence rate is reduced correspondingly to be close to
$\calO(n^{-\lambda-1/2})$.

The question whether component-by-component constructions can be used in
the context of this setting remains open for future research.

\subsection*{Acknowledgement}

P.~Kritzer is supported by the Austrian Science Fund (FWF) Project
F5506-N26, which is part of the Special Research Program ``Quasi-Monte
Carlo Methods: Theory and Applications''. F.Y.~Kuo acknowledges the
financial support from the Australian Research Council (FT130100655 and
DP150101770). D. Nuyens acknowledges the financial support from the KU
Leuven research fund (OT:3E130287 and C3:3E150478).

P.~Kritzer and M.~Ullrich furthermore gratefully acknowledge the partial
support of the Erwin Schr\"odinger International Institute for Mathematics
and Physics (ESI) in Vienna under the thematic programme ``Tractability of
High Dimensional Problems and Discrepancy''.

P.~Kritzer, F.Y.~Kuo, and D.~Nuyens acknowledge the supports from the
Taiwanese National Center for Theoretical Sciences (NCTS) -- Mathematics
Division, and the National Science Foundation Grant DMS-1638521 to the
Statistical and Applied Mathematical Sciences Institute (SAMSI) under its
2017 year-long program on ``Quasi-Monte Carlo and High-Dimensional
Sampling Methods for Applied Mathematics''.

\begin{small}
\noindent\textbf{Authors' addresses:}\\

 \noindent Peter Kritzer\\
 Johann Radon Institute for Computational and Applied Mathematics (RICAM)\\
 Austrian Academy of Sciences\\
 Altenbergerstr. 69, 4040 Linz, Austria.\\
 \texttt{peter.kritzer@oeaw.ac.at}

 \medskip

 \noindent Frances Y. Kuo\\
 School of Mathematics and Statistics\\
 University of New South Wales\\
 Sydney, NSW, 2052, Australia.\\
 \texttt{f.kuo@unsw.edu.au}

 \medskip

 \noindent Dirk Nuyens\\
 Department of Computer Science\\
 KU Leuven\\
 Celestijnenlaan 200A, 3001 Leuven, Belgium.\\
 \texttt{dirk.nuyens@cs.kuleuven.be}

 \medskip

 \noindent Mario Ullrich\\
 Department of Analysis\\
 JKU Linz\\
 Altenbergerstr. 69, 4040 Linz, Austria.\\
 \texttt{mario.ullrich@jku.at}

 \end{small}

\end{document}